\documentclass[12pt]{article} 

\usepackage{amssymb}
\usepackage{amsthm}
\usepackage{mathrsfs}
\usepackage{mathtools}
\usepackage[pdftex]{graphicx}
\usepackage{caption}
\usepackage{subcaption}
\usepackage{placeins}
\usepackage{url}
\usepackage{tikz} 
\usetikzlibrary{matrix,arrows}
\usepackage{dsfont}
\usepackage{fullpage}
\usepackage{hyperref}

\theoremstyle{definition}
\newtheorem{definition}{Definition}
\newtheorem*{open question}{Open Question}
\newtheorem{remark}{Remark}
\theoremstyle{plain}
\newtheorem{lemma}{Lemma}
\newtheorem{theorem}{Theorem}
\newtheorem*{theorem 7}{Theorem~7 of \cite{Coordinate-free}, reformulated}
\newtheorem*{The Evasion Problem}{The Evasion Problem}

\newcommand{\I}{\ensuremath{\mathbb{I}}}

\newcommand{\R}{\ensuremath{\mathbb{R}}}
\newcommand{\1}{\ensuremath{\mathds{1}}}

\newcommand{\xD}{\ensuremath{\mathcal{D}}}

\DeclareMathOperator{\Hom}{Hom}

\DeclareMathOperator{\VR}{VR}

\begin{document}

\title{Evasion Paths in Mobile Sensor Networks}
\author{Henry Adams and Gunnar Carlsson}
\maketitle

\begin{abstract}
Suppose that ball-shaped sensors wander in a bounded domain. A sensor doesn't know its location but does know when it overlaps a nearby sensor. We say that an evasion path exists in this sensor network if a moving intruder can avoid detection. In {\em Coordinate-free coverage in sensor networks with controlled boundaries via homology}, Vin de~Silva and Robert Ghrist give a necessary condition, depending only on the time-varying connectivity data of the sensors, for an evasion path to exist. Using zigzag persistent homology, we provide an equivalent condition that moreover can be computed in a streaming fashion. However, no method with time-varying connectivity data as input can give necessary and sufficient conditions for the existence of an evasion path. Indeed, we show that the existence of an evasion path depends not only on the fibrewise homotopy type of the region covered by sensors but also on its embedding in spacetime. For planar sensors that also measure weak rotation and distance information, we provide necessary and sufficient conditions for the existence of an evasion path.
\end{abstract}

\section{Introduction}

In minimal sensor network problems, one is given only local data measured by many weak sensors but tries to answer a global question \cite{EstrinCullerPisterSukhatme, GaoGuibas}. Tools from topology can be useful for this passage from local to global. For example, \cite{TargetEnumeration} combines redundant local counts of targets to obtain an accurate global count using integration with respect to Euler characteristic. Coverage problems are another class of problems in minimal sensing: when sensors are scattered throughout a domain, can we determine if the entire domain is covered? See \cite{AhmedKanhereSalilJha, FanJin, GhoshDas, Wang} for surveys of coverage problems, and see \cite{Coordinate-free, CoverageViaPersistentHomology} for topological approaches.

We are interested in the following mobile sensor network coverage problem from \cite{Coordinate-free}. Suppose that ball-shaped sensors wander in a bounded domain. A sensor can't measure its location but does know when it overlaps a nearby sensor. We say that an evasion path exists if a moving intruder can avoid being detected by the sensors. Can we determine if an evasion path exists? We refer to this question as the evasion problem. The evasion problem can also be described as a pursuit-evasion problem in which the domain is continuous and bounded, there are multiple sensors searching for intruders, and an intruder moves continuously and with arbitrary speed. We do not control the motions of the sensors; the sensors wander continuously but otherwise arbitrarily. We cannot measure the locations of the sensors but instead know only their time-varying connectivity data. Using this information, we would like to determine whether it is possible for an intruder to avoid the sensors. See \cite{ChungHollingerIsler} for a survey of related pursuit-evasion problems, and see \cite{MobilityImprovesCoverage, DetectionOfIntelligentMobile} for scenarios in which the motion of the sensors or intruders can be controlled.

After introducing the evasion problem in \cite{Coordinate-free}, de~Silva and Ghrist give a necessary homological condition for an evasion path to exist. Using zigzag persistent homology, we provide an equivalent condition that moreover can be computed in a streaming fashion. However, it turns out that homology alone is not sufficient for the evasion problem. Indeed, neither the fibrewise homotopy type of the sensor network nor any invariants thereof determine if an evasion path exists; we show that the fibrewise embedding of the sensor network into spacetime also matters. Knowing this, we provide necessary and sufficient conditions for the existence of an evasion path for planar sensors that can also measure weak rotation and distance data.

In Section~\ref{S: Fibrewise Spaces} we provide background material on fibrewise spaces. We define the evasion problem in Section~\ref{S: The Evasion Problem}, and in Section~\ref{S: Work of de Silva and Ghrist} we describe the work of de~Silva and Ghrist. We introduce zigzag persistence in Section~\ref{S: Zigzag Persistence} and apply it to the evasion problem in Section~\ref{S: Applying Zigzag Persistence to the Evasion Problem}. However, zigzag persistence does not give a complete solution to the evasion problem. Indeed, in Section~\ref{S: Dependence on the Embedding} we show the existence of an evasion path depends not only on the fibrewise homotopy type of the sensor network but also on the ambient isotopy class of its embedding in spacetime. In Section~\ref{S: Sensors Measuring Cyclic Orderings} we restrict attention to planar sensors measuring cyclic orderings and provide an if-and-only-if result. We conclude in Section~\ref{C: Conclusions} and describe possible directions for future work.

\section{Fibrewise Spaces}\label{S: Fibrewise Spaces}

Since we are studying mobile sensors, both the region covered by the sensors and the uncovered region change with time. In this section we encode the notion of time-varying spaces using the language of a fibrewise spaces. We also consider fibrewise maps between fibrewise spaces, what it means for two fibrewise maps to be fibrewise homotopic, and what it means for two fibrewise spaces to be fibrewise homotopy equivalent. See \cite{FibrewiseHomotopyTheory} for more information on fibrewise homotopy theory.

A fibrewise space is a space equipped with a notion of time. More precisely, let $I = [0,1]$ be the closed unit interval. A fibrewise space is a topological space $Y$ equipped with a continuous map $p \colon Y \to I$ to time; see Figure~\ref{F: cartoonYes} for an example. For any point $y \in Y$ one can think of $p(y)$ as the time coordinate associated to this point. Given two fibrewise spaces $p \colon Y \to I$ and $p' \colon W \to I$, a continuous map $f \colon Y \to W$ is said to be fibrewise if $p' \circ f = p$. In other words, a fibrewise map is time-preserving. A section for a fibrewise space $p \colon Y \to I$ is a fibrewise map $s \colon I \to Y$, that is, a continuous map $s \colon I \to Y$ with $p(s(t)) = t$ for all $t \in I$.

\begin{figure}[h]
\begin{center}
    	\includegraphics[width=3in]{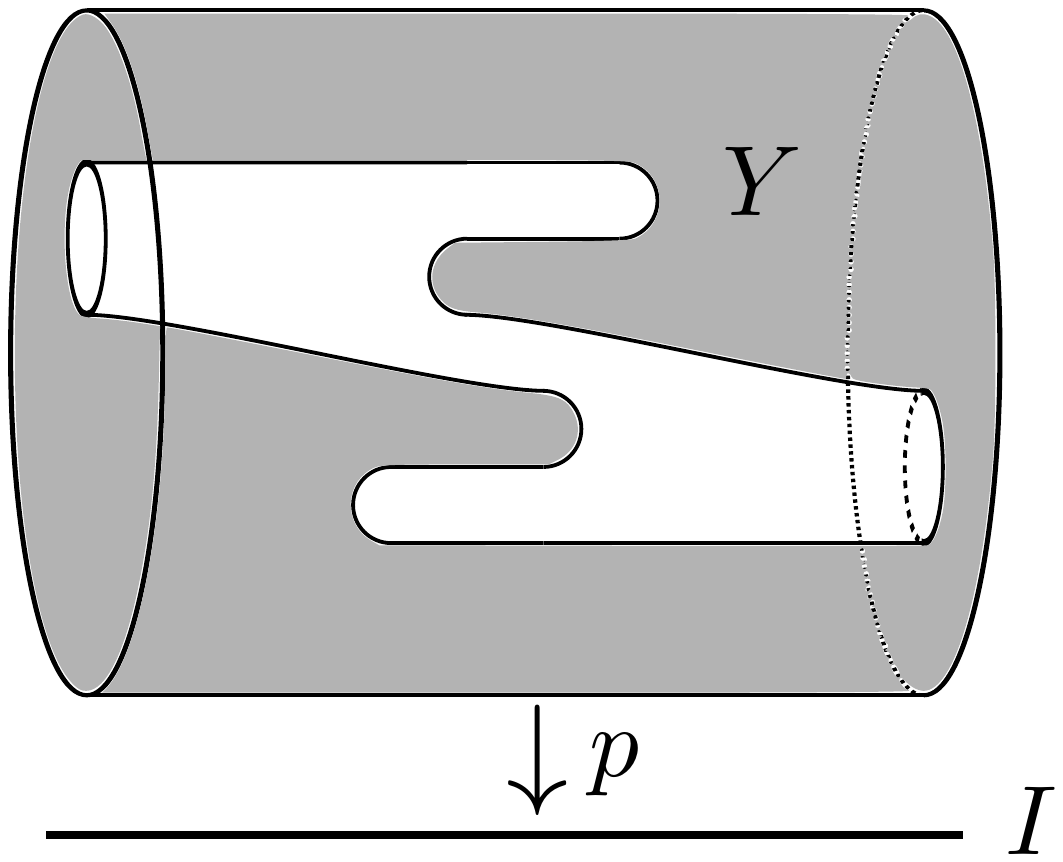}
\end{center}
\caption{A fibrewise space $p \colon Y \to I$.}
\label{F: cartoonYes}
\end{figure}

Roughly speaking, two fibrewise maps are fibrewise homotopic when one can be deformed to the other in a time-preserving manner. More precisely, two fibrewise maps $f_0, f_1 \colon Y \to W$ are fibrewise homotopic if there is a homotopy $F \colon Y \times I \to W$  with $F(\quad, 0) = f_0$, with $F(\quad, 1) = f_1$, and with each $F(\quad, t)$ a fibrewise map. The homotopy $F$ gives a continuous and time-preserving deformation from $f_0$ to $f_1$. Two fibrewise spaces are fibrewise homotopy equivalent when they are homotopy equivalent in a time-preserving manner. More explicitly, a fibrewise map $f \colon Y \to W$ is a fibrewise homotopy equivalence if there is a fibrewise map $f' \colon W \to Y$ with compositions $f' \circ f$ and $f \circ f'$ each fibrewise homotopic to the corresponding identity map; in this case we say that fibrewise spaces $Y$ and $W$ are fibrewise homotopy equivalent.
\FloatBarrier

\section{The Evasion Problem}\label{S: The Evasion Problem}

The evasion problem we consider is introduced in \cite{Coordinate-free}, and we present it here with a few minor changes. Let $\xD \subset \R^d$ be a bounded domain homeomorphic to a $d$-dimensional ball, where $d \geq 2$. Suppose a finite set $S$ of sensor nodes moves inside this domain over the time interval $I = [0, 1]$, with each sensor $v \in S$ a continuous path $v \colon I \to \xD$. We assume that two distinct sensors never occupy the same location: for sensors $v \neq \tilde{v}$ we have $v(t) \neq \tilde{v}(t)$ for all $t$. Let $B_{v(t)} = \{y \in \xD\ |\ \|v(t) - y\| \leq 1\}$ be the unit ball covered by sensor $v$ at time $t$. The sensors can't measure their locations but two sensors do know when they overlap. This allows us to measure the time-varying connectivity graph of the sensors. The connectivity graph at time $t$ has the set of sensors $S$ as its vertex set and has an edge between sensors $v$ and $\tilde{v}$ when $B_{v(t)} \cap B_{\tilde{v}(t)} \neq \emptyset$; see Figure~\ref{F: sensorBalls_balls_connectivityGraph_Cech}(b). We assume there is an immobile subset of fence sensors whose union of balls contains the boundary $\partial \xD$ and is homotopy equivalent to $\partial \xD$.

\begin{figure}[h]
\begin{center}
	\begin{subfigure}[t]{2.12in}
		\includegraphics[width=2.12in]{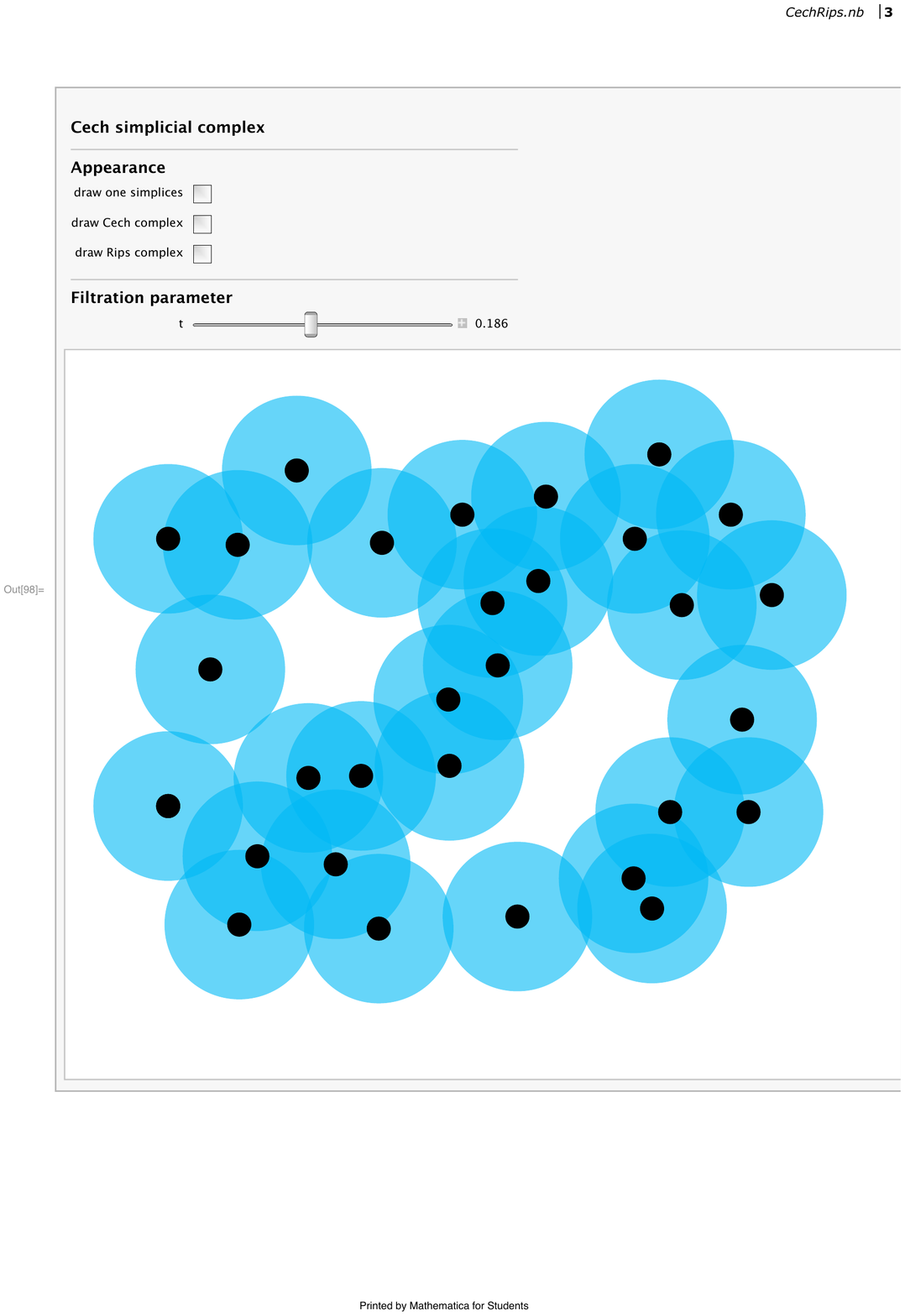}
		\caption{}
	\end{subfigure}
	\begin{subfigure}[t]{2.12in}
		\includegraphics[width=2.12in]{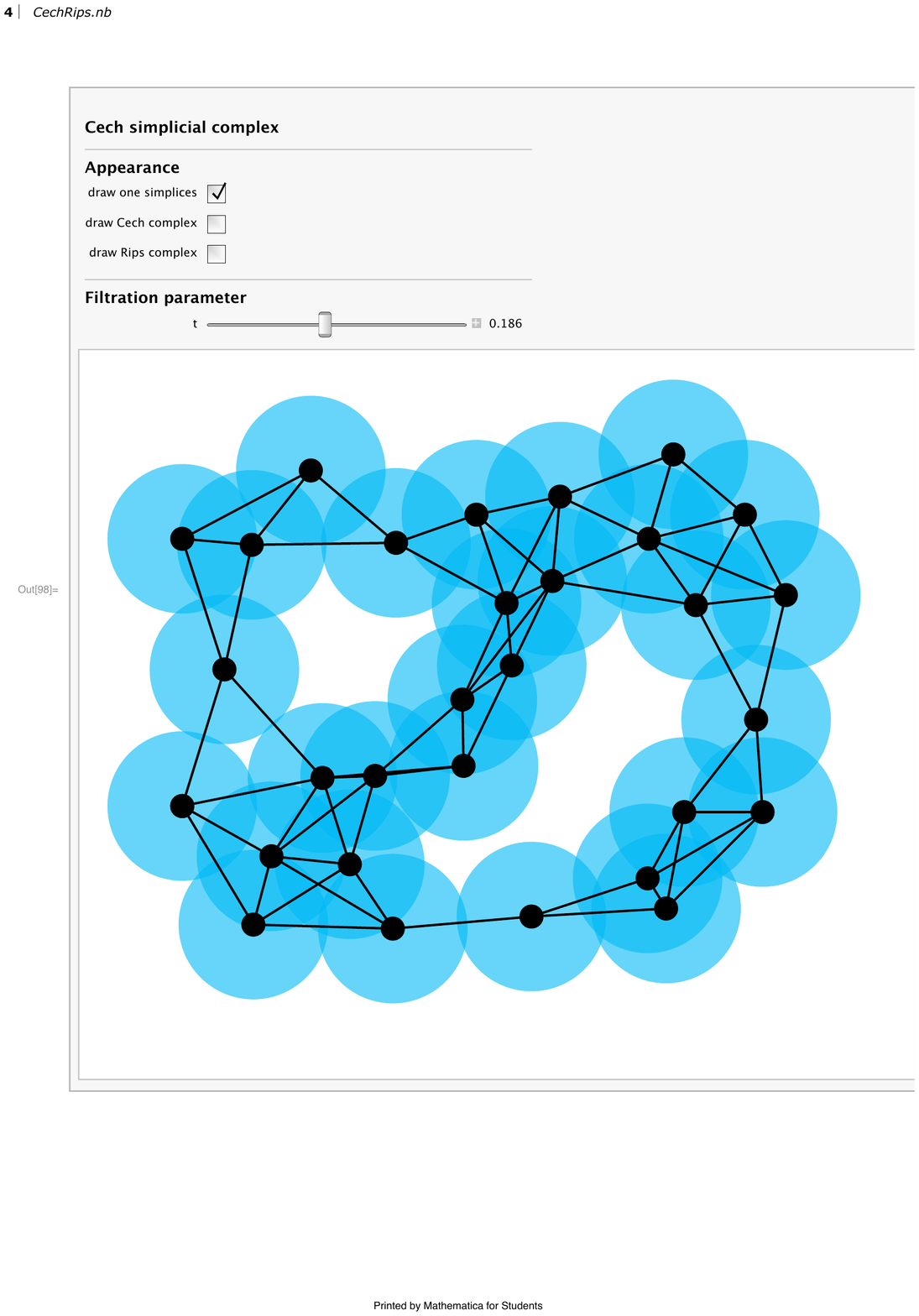}
		\caption{}
	\end{subfigure}
	\begin{subfigure}[t]{2.12in}
		\includegraphics[width=2.12in]{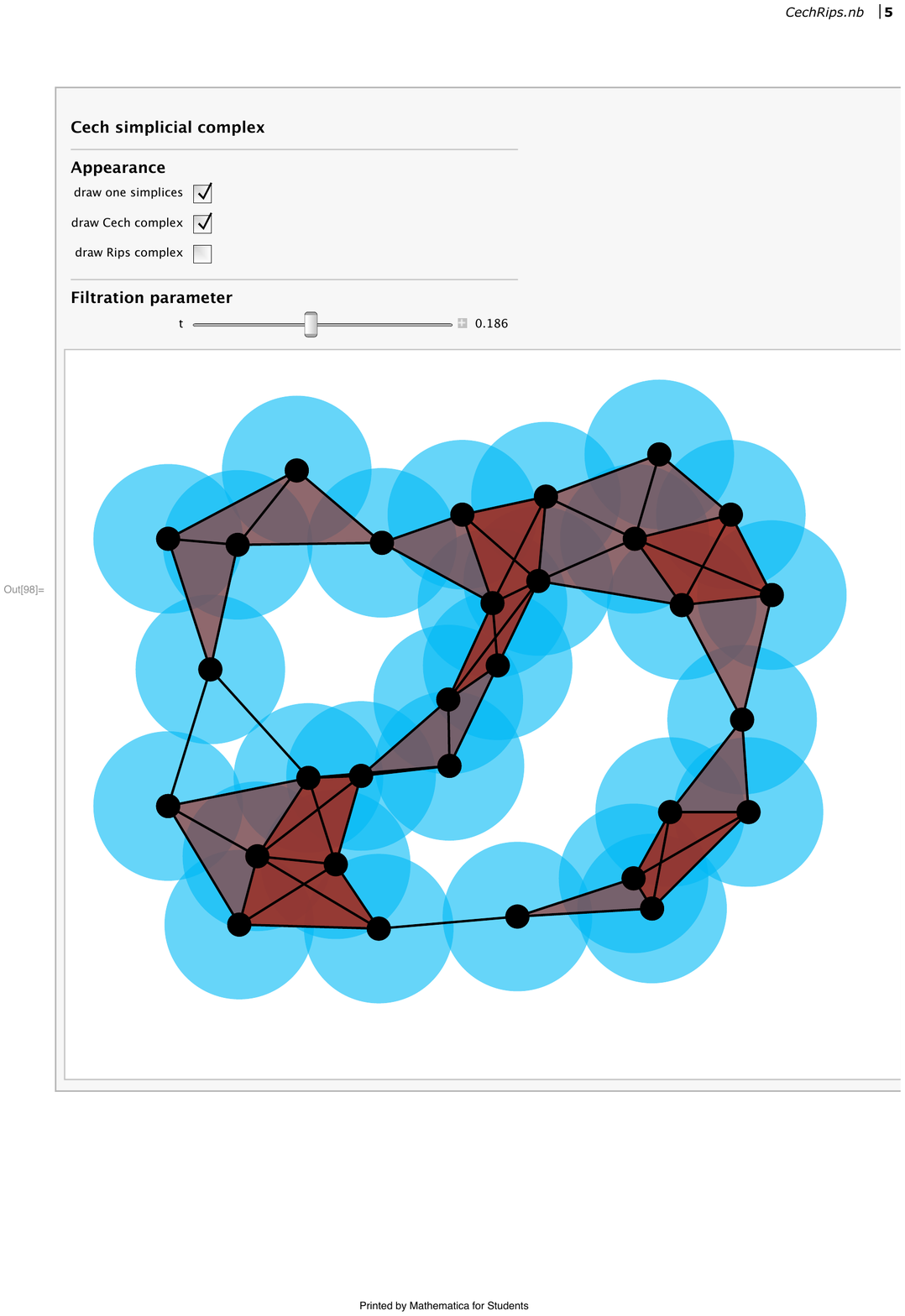}
		\caption{}
	\end{subfigure}
\end{center}
\caption{(a) A sensor network at a fixed point in time, (b) its connectivity graph, and (c) its \u Cech complex. See \href{http://www.ima.umn.edu/~henrya/research/DependenceOnTheEmbedding.avi}{Extension~1} for a video of mobile sensor networks.}
\label{F: sensorBalls_balls_connectivityGraph_Cech}
\end{figure}

The union of the balls $X(t) = \cup_{v \in S}B_{v(t)}$ is the region covered by the sensors at time $t$, and its complement $X(t)^c = \xD \setminus X(t)$ is the uncovered region at time $t$. Let
$$X = \cup_{t \in I}X(t) \times \{t\} \subset \xD \times I$$
be the subset of spacetime covered by sensors, and let $X^c = (\xD \times I) \setminus X$ be the uncovered region in spacetime. Both $X$ and $X^c$ are fibrewise spaces, that is, spaces equipped with projection maps $X \to I$ and $X^c \to I$ to time. See Figure~\ref{F: cartoonYesTeleport} for two examples.

\begin{figure}[h]
\begin{center}
	\begin{subfigure}[t]{2.5in}
		\includegraphics[width=2.5in]{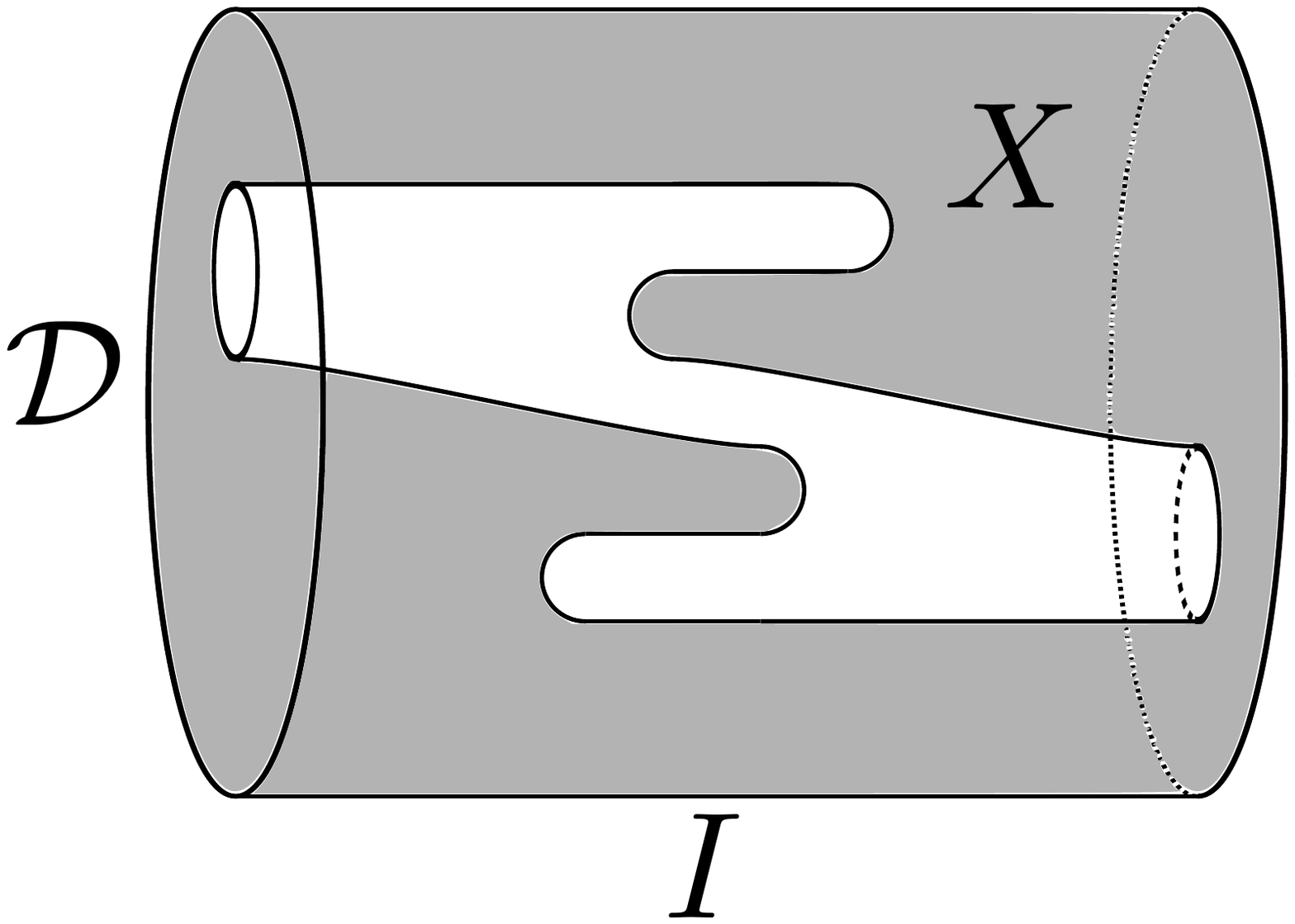}
	\end{subfigure}
	\hspace{5mm}
	\begin{subfigure}[t]{2.5in}
		\includegraphics[width=2.5in]{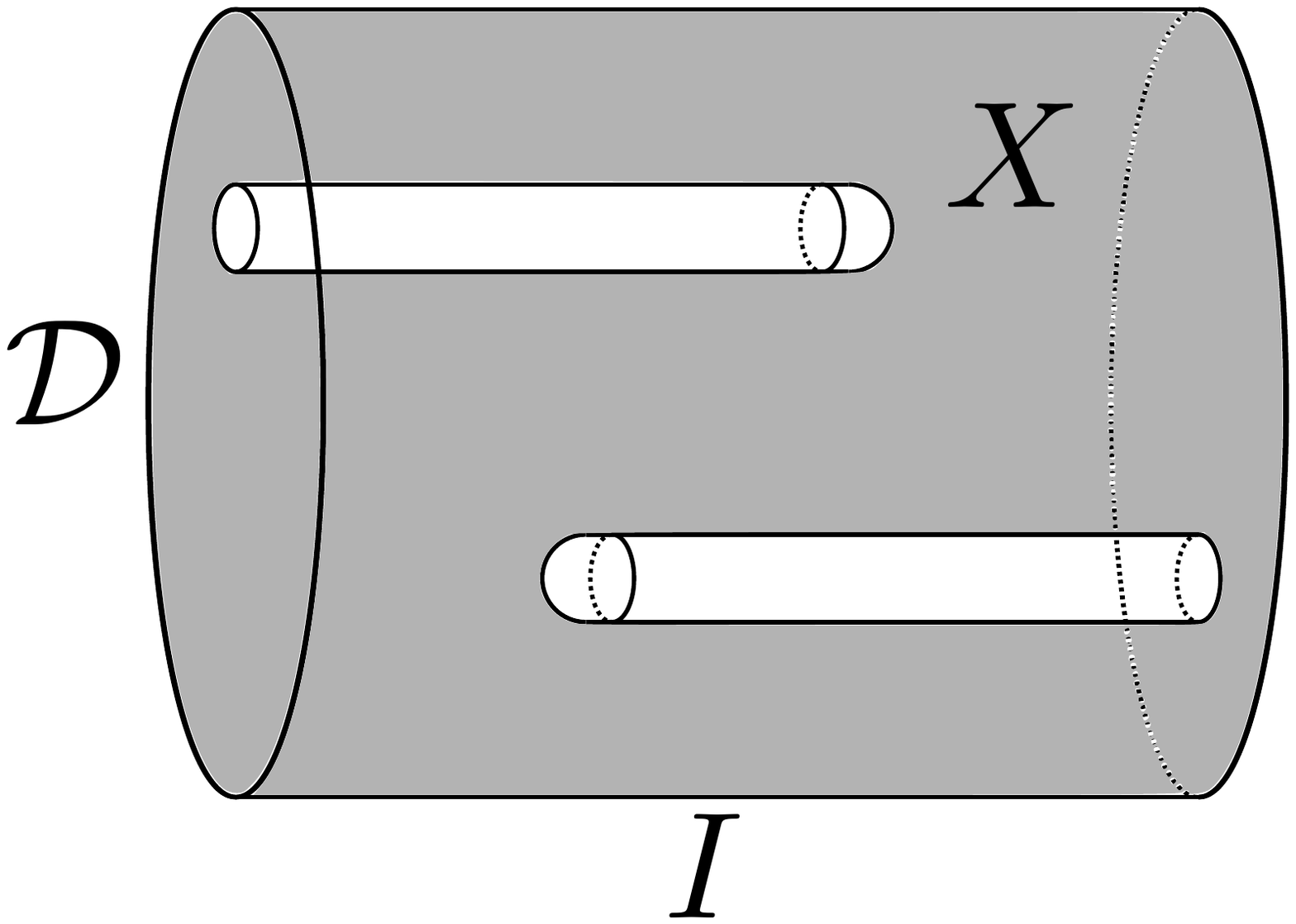}
	\end{subfigure}
\end{center}
\caption{We have drawn two planar sensor networks with domain $\xD \subset \R^2$ on the vertical axis and with time $I$ on the horizontal axis. The region $X$ in spacetime covered by the sensors is drawn in gray, and the uncovered region $X^c$ is drawn in white. The network on the left contains an evasion path. The network on the right does not contain an evasion path because an intruder must move continuously and cannot teleport locations.}
\label{F: cartoonYesTeleport}
\end{figure}

Potentially there are also intruders moving continuously in this domain. The intruders would like to avoid being detected by the sensors, but an intruder is detected at time $t$ if it lies in the covered region $X(t)$. We say that an evasion path exists when it is possible for a moving intruder to avoid being seen by the sensors.

\begin{definition}
An {\em evasion path} in a sensor network is a section $s \colon I \to X^c$ of the projection map $p \colon X^c \to I$. Equivalently, an evasion path is a continuous map $s \colon I \to \xD$ such that $s(t) \notin X(t)$ for all $t$.
\end{definition}

Given a sensor network, we would like to determine whether or not an evasion path exists. However, connectivity graphs alone cannot determine the existence of an evasion path. Consider the two sensor networks in Figure~\ref{F: connectivityGraphInsufficient}, and suppose that in each case all three sensors are immobile over the entire time interval. Then the two networks have the same connectivity graphs at each point in time, but the sensor network on the left contains an evasion path while the network on the right does not.

\begin{figure}[h]
\begin{center}
	\begin{subfigure}[t]{1.7in}
		\includegraphics[width=1.7in]{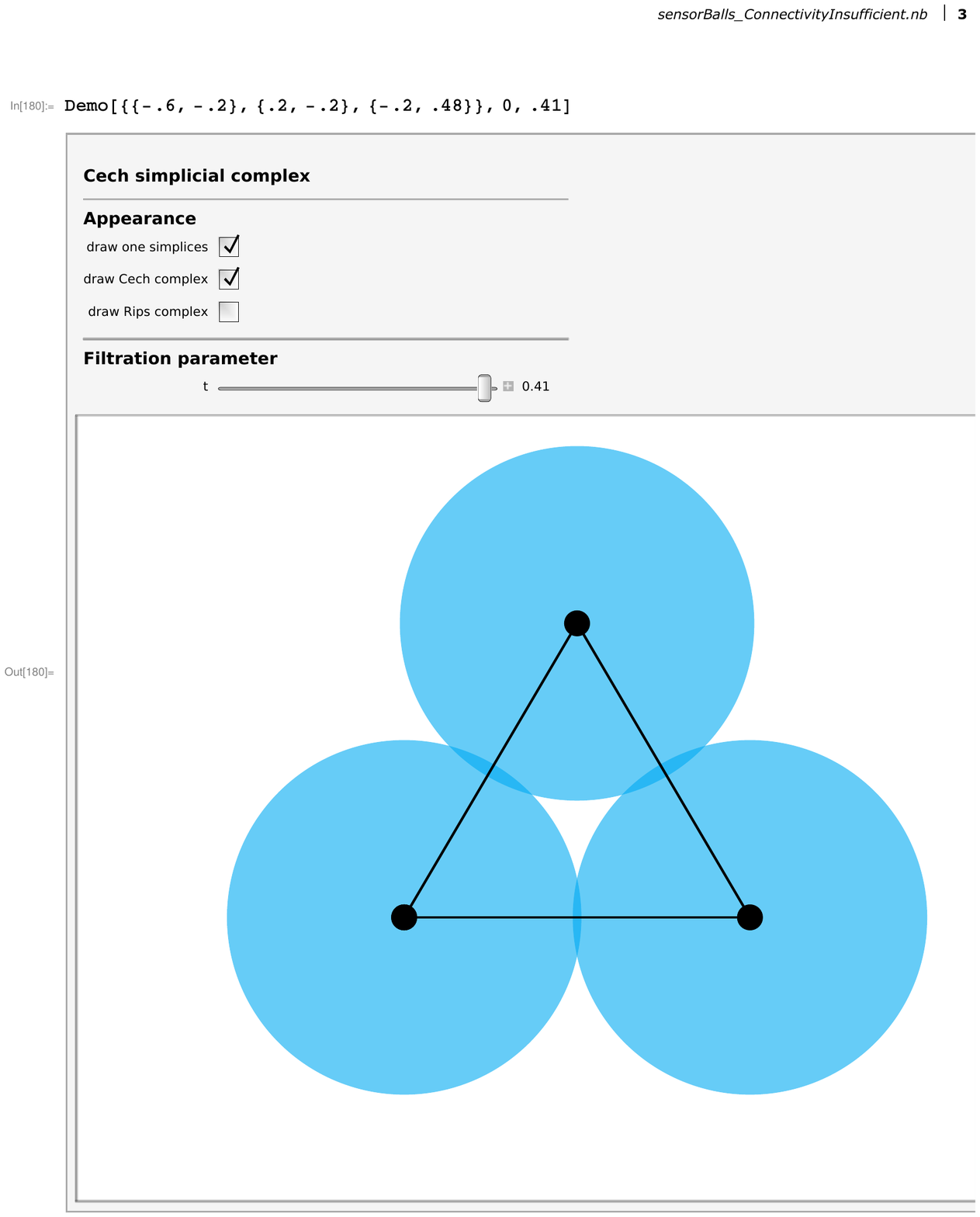}
		\caption{}
	\end{subfigure}
	\hspace{10mm}
	\begin{subfigure}[t]{1.7in}
		\includegraphics[width=1.7in]{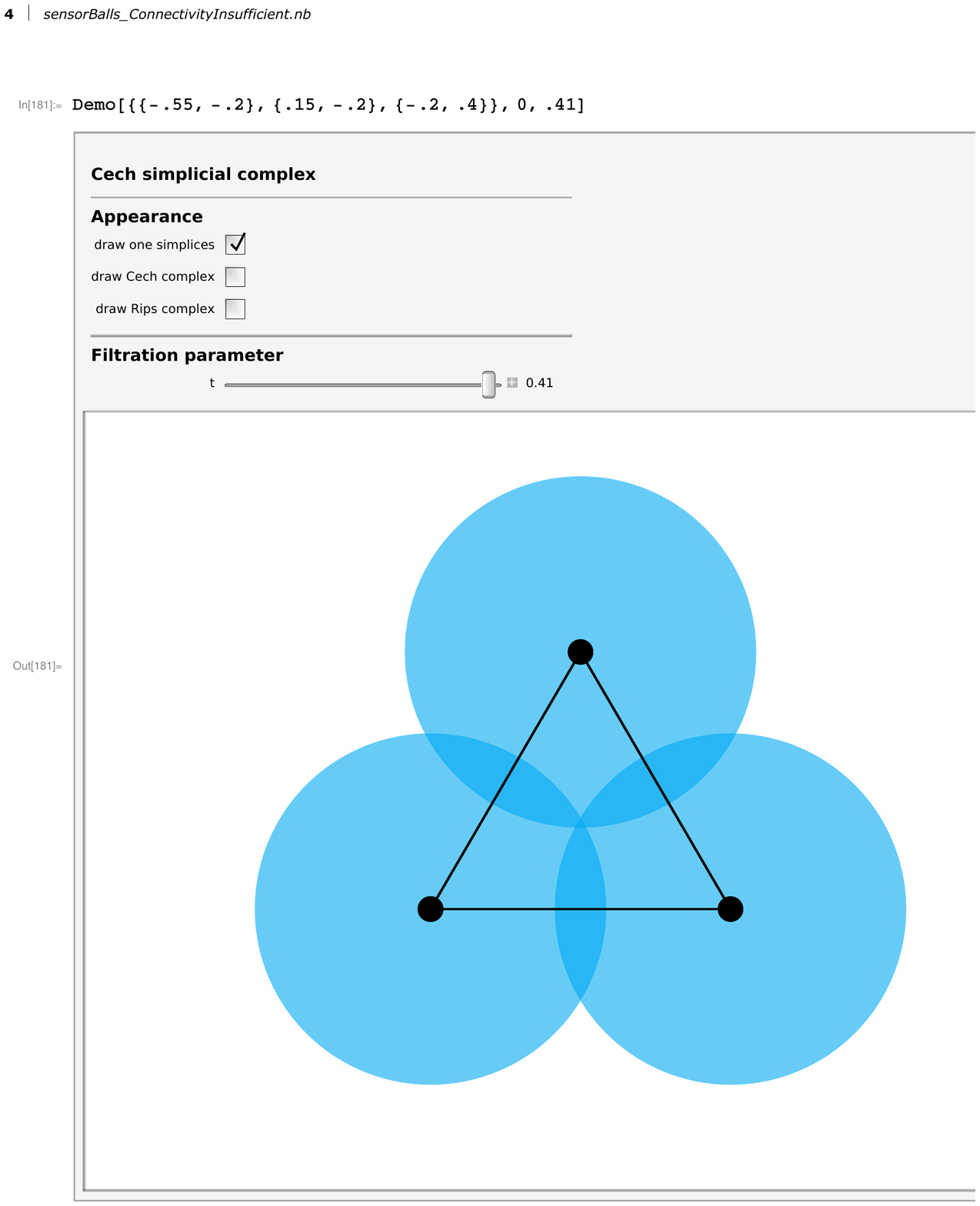}
		\caption{}
	\end{subfigure}
\end{center}
\caption{Let (a) and (b) be two different sensor networks. Imagine that no sensor moves over the entire time interval. Then network (a) has an evasion path while network (b) does not, even though the two networks have the same connectivity graph at each point in time.}
\label{F: connectivityGraphInsufficient}
\end{figure}

Since connectivity graphs alone are insufficient, we consider \u Cech simplicial complexes encoding higher connectivity information. The \u Cech simplicial complex $C(t)$ of the sensors is the nerve of the unit balls $\{B_{v(t)}\}_{v \in S}$ \cite{ComputationalTopology}. This means that the vertex set of $C(t)$ is the set of sensors $S$, and we have a $k$-simplex when the intersection of the corresponding $(k+1)$-balls is nonempty. 
That is, simplex $[v_0v_1 \ldots v_k]$ is in $C(t)$ when 
$$B_{v_0(t)} \cap B_{v_1(t)} \cap \ldots \cap B_{v_k(t)} \neq \emptyset.$$
See Figure~\ref{F: sensorBalls_balls_connectivityGraph_Cech}(c) for an example. Note that the 1-skeleton of $C(t)$ is the connectivity graph at time $t$. An important property is that the \u Cech complex $C(t)$ is homotopy equivalent to the union of the balls $X(t)$ by the nerve lemma \cite[Corollary~4G.3]{Hatcher}. We are now ready to state the evasion problem.

\begin{The Evasion Problem}
Given the time-varying \u Cech complex $C(t)$ of a sensor network over all times $t \in I$, can one determine if an evasion path exists?
\end{The Evasion Problem}

\begin{remark}
One can recover the fibrewise homotopy type of the covered region $X$ of a sensor network from the time-varying \u Cech complex.
\end{remark}

\begin{remark} In the static setting in which the sensors do not move, one can use the \u Cech complex to determine whether or not the sensors cover the entire domain $\xD$. The evasion problem asks whether an analogous statement is true in the setting of mobile sensors.
\end{remark}

In applications it is generally unreasonable to assume that our sensors can measure \u Cech complexes. This would require the task of detecting $k$-fold intersections, which is not possible under many models of minimal sensing. However, we can approximate the \u Cech complex from either above or below using the Vietoris--Rips complex \cite{Vietoris}. The Vietoris--Rips complex is the maximal simplicial complex built on top of the connectivity graph, and hence can be recovered by sensors measuring only two-fold overlaps. This approximation allows us to take results based on \u Cech complexes and produce analogous approximate results using only Vietoris--Rips complexes; see Appendix~\ref{A: Cech Complex Approximations} for more details. For example, the results in \cite{Coordinate-free} are stated in terms of Vietoris--Rips complexes. We avoid such approximations and instead use \u Cech complexes.

\section{Work of de~Silva and Ghrist}\label{S: Work of de Silva and Ghrist}

In order to explain de~Silva and Ghrist's work on the evasion problem, we first define the stacked \u Cech complex. The stacked \u Cech complex is a single cell complex encoding the \u Cech simplicial complexes $C(t)$ for all times $t \in I$. We assume there are only a finite number of times
$$0 < t_1 < \ldots < t_n < 1$$
when the \u Cech complex changes. Hence for $t$ and $t'$ in either $(t_i, t_{i+1})$, $[0, t_1)$, or $(t_n, 1]$, we have $C(t) = C(t')$. Moreover, we assume that at each time $t_i$ simplices are either added to or removed from the \u Cech complex but not both. Since the sensors balls are closed, a simplex $\sigma$ is
\begin{itemize}
\item added at time $t_i$ if $\sigma \in C(t_i)$ but $\sigma \notin C(t)$ for $t \in (t_{i-1}, t_i)$, and 
\item removed at time $t_i$ if $\sigma \in C(t_i)$ but $\sigma \notin C(t)$ for $t \in (t_i, t_{i+1})$.
\end{itemize}
Choose interleaving times
$$0 = s_0 < t_1 < s_1 < \ldots < t_n < s_n = 1.$$

\begin{definition}
The {\em stacked \u Cech complex} $p \colon SC \to I$ is the fibrewise space obtained from the disjoint union $\amalg_{i=0}^n C(s_i) \times [t_i, t_{i+1}]$, where $t_0 = 0$ and $t_{n+1} = 1$, by identifying
\begin{itemize}
\item $C(s_{i-1}) \times \{t_i\}$ as a subset of $C(s_i) \times \{t_i\}$ if simplices are added at $t_i$, and
\item $C(s_i) \times \{t_i\}$ as a subset of $C(s_{i-1}) \times \{t_i\}$ if simplices are removed at $t_i$.
\end{itemize}
Map $p \colon SC \to I$ is the projection onto the second coordinate, and note that $p^{-1}(t) = C(t)$. 
\end{definition}

This definition is similar to the definition of the stacked Vietoris--Rips complex in \cite{Coordinate-free}. See Figure~\ref{F: stackedCech} for a small example.

\begin{figure}[h]
\begin{center}
	\begin{subfigure}[t]{4in}
		\includegraphics[width=4in]{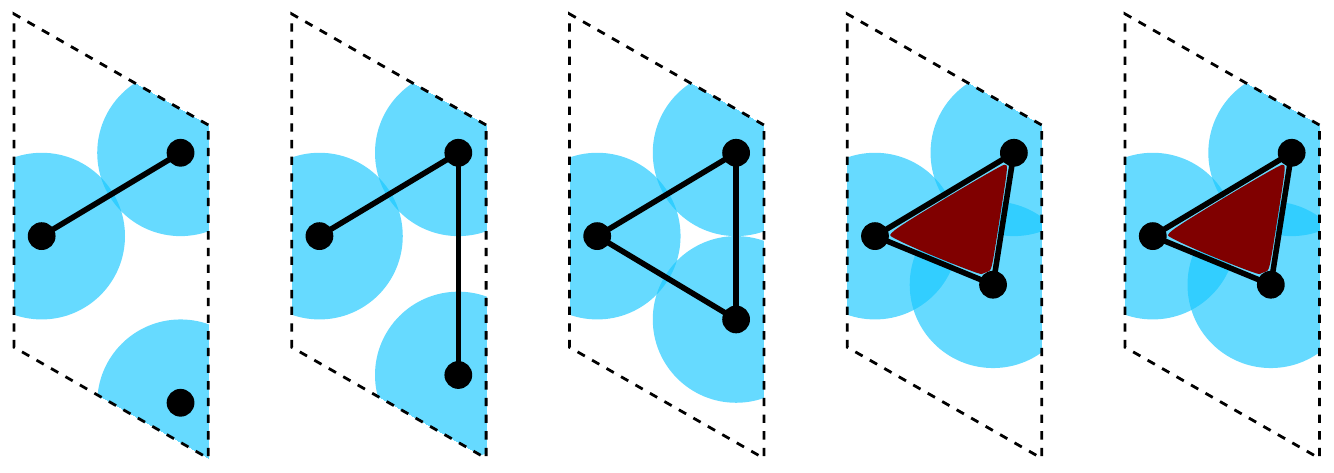}
	\end{subfigure}
	\begin{subfigure}[t]{4in}
		\includegraphics[width=4in]{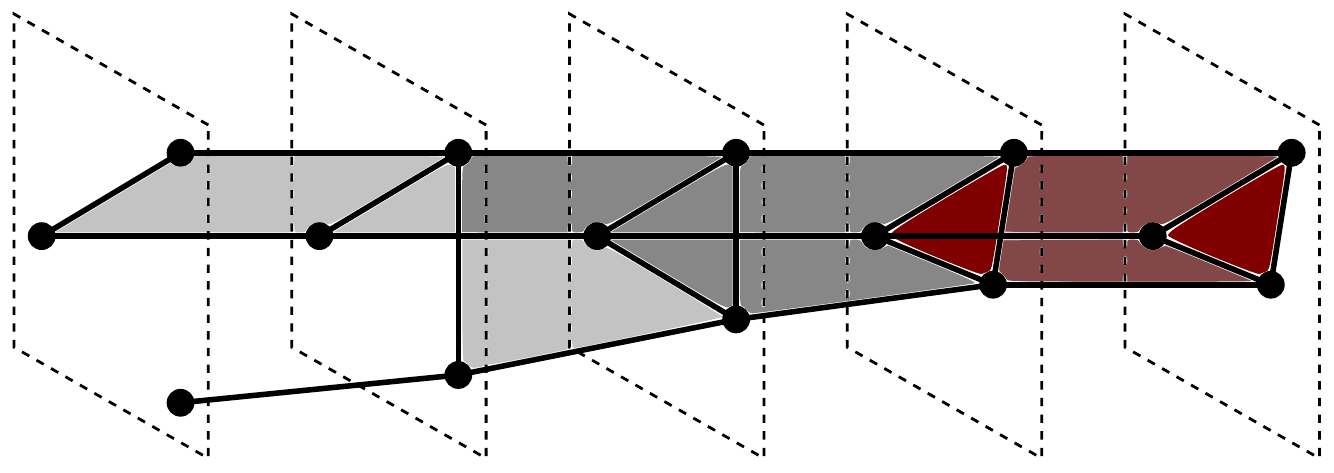}
	\end{subfigure}
\end{center}
\caption{The stacked \u Cech complex for three sensor nodes. The top row shows how the \u Cech complex changes: initially the \u Cech complex consists of an edge and a vertex, and as the sensors move closer together two more edges and a 2-simplex are added. The bottom row shows the stacked \u Cech complex, obtained by adding prism cells.}
\label{F: stackedCech}
\end{figure}

De Silva and Ghrist give a partial answer to the evasion problem in Theorem~7 of \cite{Coordinate-free}. We state their result using the stacked \u Cech complex instead of the stacked Vietoris--Rips complex, and for $\xD \subset \R^d$ with dimension $d$ arbitrary. Recall that a subset of immobile fence sensors covers the boundary $\partial \xD$, and let $F \times [0, 1]$ be the subcomplex of the stacked \u Cech complex $SC$ consisting of only these fence sensors.

\begin{theorem 7}
If there exists some $[\alpha] \in H_d(SC, F \times [0,1])$ with $0 \neq [\partial \alpha] \in H_{d-1}(F \times [0,1])$, then there is no evasion path in the sensor network.
\end{theorem 7}

\begin{figure}[h]
\begin{center}
	\begin{subfigure}[t]{2.2in}
		\includegraphics[width=2.2in]{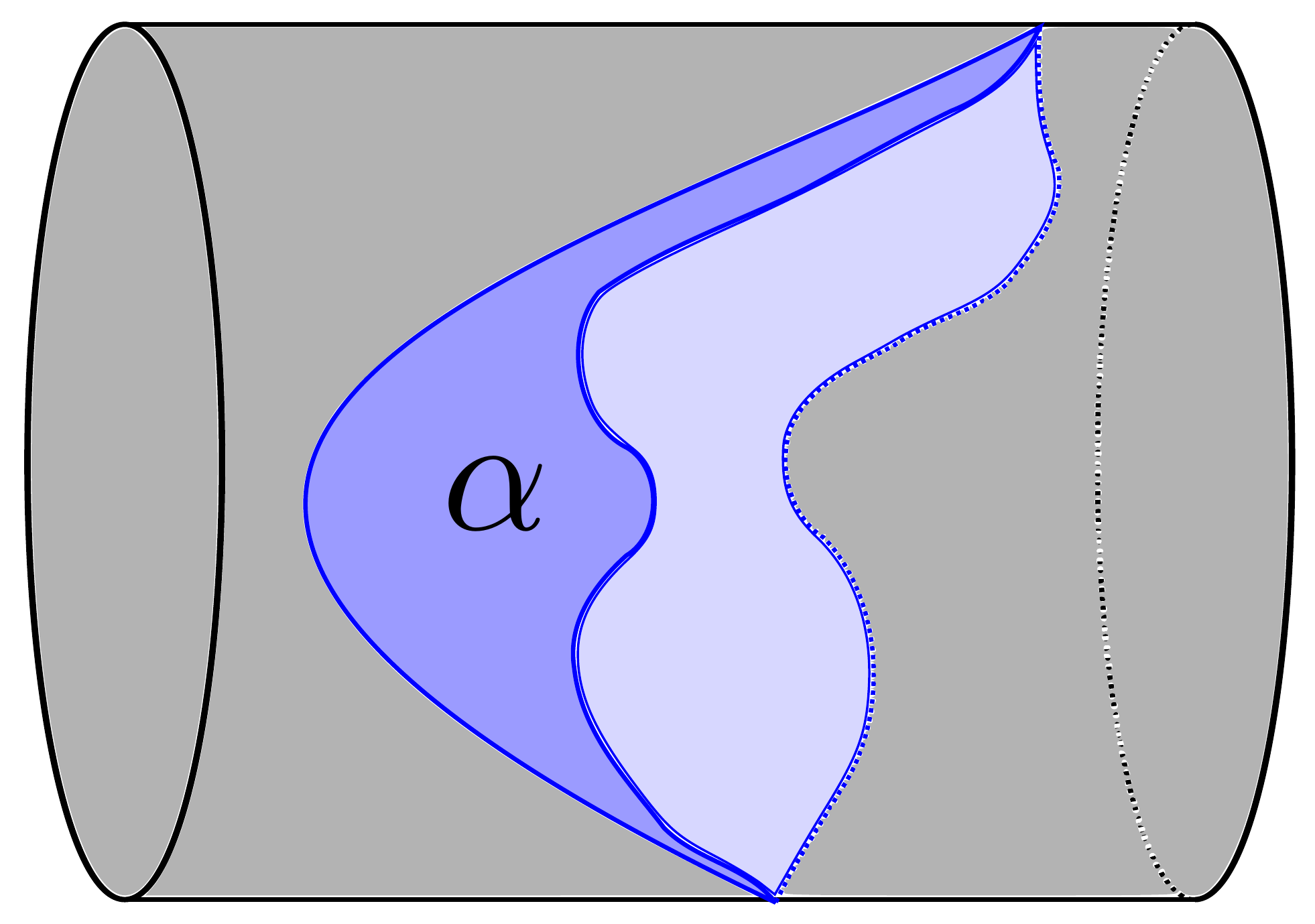}
		\caption{}
	\end{subfigure}
	\hspace{10mm}
	\begin{subfigure}[t]{2.2in}
		\includegraphics[width=2.2in]{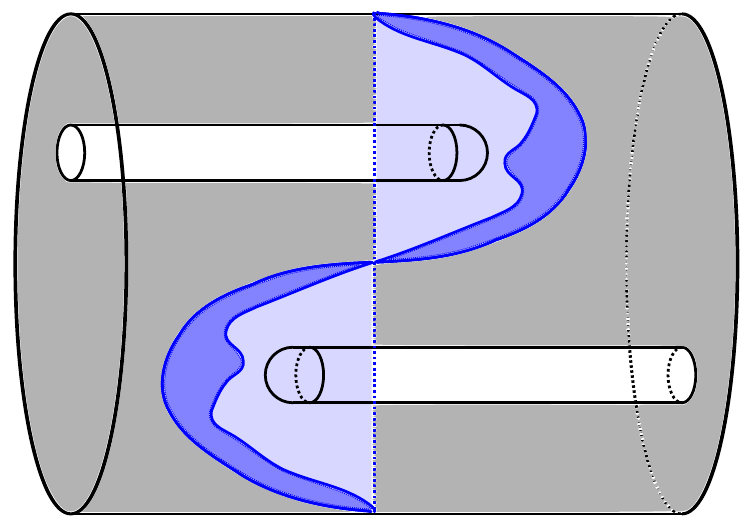}
		\caption{}
	\end{subfigure}
	\begin{subfigure}[t]{2.2in}
		\includegraphics[width=2.2in]{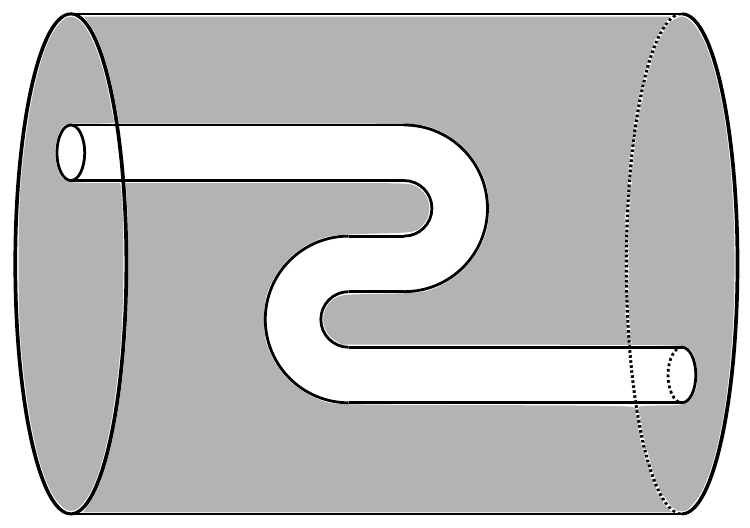}
		\caption{}
	\end{subfigure}
\end{center}
\caption{(a) A relative 2-cycle $\alpha$ from Theorem~7 of \cite{Coordinate-free} is depicted in blue. (b) Theorem~7 of \cite{Coordinate-free} proves that there is no evasion path in this sensor network. (c) Although there is no evasion path in this network, Theorem~7 of \cite{Coordinate-free} does not apply.}
\label{F: cartoonSheetTeleportSheet}
\end{figure}

We explain the picture behind this theorem. Suppose there is some
$$[\alpha] \in H_d(SC, F \times [0,1])$$
with $0 \neq [\partial \alpha]$. Let $\alpha$ be a relative $d$-cycle in $SC$ that represents the homology class $[\alpha]$. The condition $0 \neq [\partial \alpha]$ means  that the boundary of $\alpha$ wraps a nontrivial number of times around $F \times [0, 1]$. We think of $\alpha$ as a ``sheet'' in the region of spacetime covered by the sensors that separates time zero from time one; see Figure~\ref{F: cartoonSheetTeleportSheet}(a). If there is such a relative cycle $\alpha$ then no evasion path can exist. For example, Theorem~7 of \cite{Coordinate-free} proves there is no evasion path in the sensor network in Figure~\ref{F: cartoonSheetTeleportSheet}(b).

Theorem~7 of \cite{Coordinate-free} is equivalent to the following statement: if there is an evasion path in the sensor network, then every $[\alpha] \in H_d(SC, F \times [0,1])$ satisfies $0 = [\partial \alpha]$. This homological criterion is necessary but not sufficient for the existence of an evasion path. The insufficiency is demonstrated by the sensor network in Figure~\ref{F: cartoonSheetTeleportSheet}(c): every $[\alpha] \in H_d(SC, F \times [0,1])$ satisfies $0 = [\partial \alpha]$, but there is no evasion path since an intruder cannot move backwards in time. Can we sharpen this theorem to get necessary and sufficient conditions?

\section{Zigzag Persistence}\label{S: Zigzag Persistence}

We introduce zigzag persistence in this section before applying it to the evasion problem in the following section. Zigzag persistence \cite{ZigzagPersistence} is a generalization of persistent homology \cite{TopologicalPersistence, ComputingPersistent} in which the maps can go in either direction, and zigzag persistence is also the specific case of quiver theory \cite{Gabriel, DerksenWeyman} when the underlying quiver is a Dynkin diagram of type $A_n$.

A zigzag diagram is a directed graph with $n$ vertices and $n-1$ arrows
$$\bullet_1 \xleftrightarrow{} \bullet_2 \xleftrightarrow{} \bullet_3 \xleftrightarrow{} \ldots \xleftrightarrow{} \bullet_{n-1} \xleftrightarrow{} \bullet_n,$$
where each arrow points either to the left or to the right. Fix a field $k$. A zigzag module $V$ is a diagram
$$V_1 \xleftrightarrow{\ q_1\ } V_2 \xleftrightarrow{\ q_2\ } \ldots \xleftrightarrow{\ q_{n-2}\ } V_{n-1} \xleftrightarrow{\ q_{n-1}\ } V_n,$$
where each $V_i$ is a finite vector space over $k$ and each $q_i$ is a linear map pointing either to the left or to the right. A morphism $f$ between two zigzag modules $V$ and $W$ is a diagram
\begin{center}
\begin{tikzpicture}[description/.style={fill=white,inner sep=2pt}] 
\matrix (m) [matrix of math nodes, row sep=3em, 
column sep=2.5em, text height=1.5ex, text depth=0.25ex] 
{  
V_1 & V_2 & \ldots & V_{n-1} & V_n\\
W_1	& W_2 & \ldots & W_{n-1} & W_n\\
};
\path[<->,font=\scriptsize]
(m-1-1) edge node[auto] {} (m-1-2)
(m-1-2) edge node[auto] {} (m-1-3)
(m-1-3) edge node[auto] {} (m-1-4)
(m-1-4) edge node[auto] {} (m-1-5)
(m-2-1) edge node[auto] {} (m-2-2)
(m-2-2) edge node[auto] {} (m-2-3)
(m-2-3) edge node[auto] {} (m-2-4)
(m-2-4) edge node[auto] {} (m-2-5)
;
\path[->,font=\scriptsize]
(m-1-1) edge node[left] {$f_1$} (m-2-1)
(m-1-2) edge node[left] {$f_2$} (m-2-2)
(m-1-4) edge node[left] {$f_{n-1}$} (m-2-4)
(m-1-5) edge node[left] {$f_n$} (m-2-5)
;
\end{tikzpicture}
\end{center}
in which all of the squares commute. If each $f_i$ is an isomorphism of vector spaces, then $f$ is an isomorphism of zigzag modules. In the language of category theory, a zigzag module is a functor from the free category generated by a zigzag diagram to the category of finite vector spaces, and a morphism between two zigzag modules is a natural transformation \cite{MacLane}.

The direct sum of two zigzag modules $V$ and $V'$ is given by $(V \oplus V')_i = V_i \oplus V'_i$, with connecting linear maps of the form $q_i \oplus q'_i$. For birth and death indices $1 \leq b \leq d \leq n$, the interval module $\I(b,d)$ is defined by 
$$\I(b,d)_i = \begin{cases} k & \mbox{if } b \leq i \leq d \\ 0 & \mbox{otherwise.} \end{cases}
$$
The connecting linear maps of $\I(b,d)$ are identity maps $\1$ between adjacent copies of the field $k$, and zero maps otherwise.
So $\I(b,d)$ looks like
$$0 \xleftrightarrow{} \ldots \xleftrightarrow{} 0 \xleftrightarrow{} k \xleftrightarrow{\ \1\ } \ldots \xleftrightarrow{\ \1\ } k \xleftrightarrow{} 0 \xleftrightarrow{} \ldots \xleftrightarrow{} 0,$$
where the first $k$ is in slot $b$ and the last $k$ is in slot $d$. As in persistent homology, a zigzag module is described up to isomorphism by its barcode decomposition \cite{Gabriel, ZigzagPersistence}.

\begin{theorem}[Gabriel]\label{T: Gabriel}
A zigzag module $V$ can be decomposed as
$$ V \cong \oplus_{l=1}^N \I(b_l, d_l),$$
where the factors in the decomposition are unique up to reordering.
\end{theorem}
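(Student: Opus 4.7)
The plan is to follow a two-step strategy: first establish the existence of an interval decomposition, then establish uniqueness of the multiplicities. Since a zigzag diagram is a quiver of Dynkin type $A_n$ (independent of arrow orientation), this is a special case of Gabriel's classification of finite-type quivers, whose indecomposable representations for $A_n$ are in bijection with positive roots, i.e.\ with pairs $(b,d)$ satisfying $1 \leq b \leq d \leq n$, corresponding precisely to the interval modules $\I(b,d)$. I would aim for a self-contained proof rather than appealing to the full machinery of quiver theory.

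For existence, I would induct on $n$. The base case $n = 1$ is immediate, since any finite-dimensional vector space $V_1$ is a direct sum of $\dim V_1$ copies of $\I(1,1)$. For the inductive step, apply the hypothesis to the truncation $V_1 \xleftrightarrow{} \ldots \xleftrightarrow{} V_{n-1}$ to obtain a decomposition into intervals $\I(b_l,d_l)$ with $d_l \leq n-1$, and then incorporate $V_n$ via the connecting map $q_{n-1}$. After selecting a basis of $V_{n-1}$ adapted to the existing decomposition, I would split $V_n$ according to the arrow orientation: if $q_{n-1}$ points rightward, choose a basis of $V_n$ compatible with the image of $q_{n-1}$, extend those intervals $\I(b_l,n-1)$ whose generators map to linearly independent vectors into $\I(b_l,n)$, and append copies of $\I(n,n)$ for a complement; the leftward case is handled symmetrically after simultaneously putting $q_{n-1}$ and the interval decomposition of $V_{n-1}$ into a Smith-normal-form-like block structure. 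Careful use of elementary basis changes preserves the existing interval structure, so this produces a decomposition of the full length-$n$ module.

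For uniqueness, I would show that the multiplicity $\beta_{b,d}$ of $\I(b,d)$ is an invariant of $V$ computable directly from ranks. Define, for each interval $[b,d]$, a subspace of $V_b$ consisting of vectors that can be ``propagated'' to $V_d$ by alternately taking images along rightward arrows and preimages along leftward arrows between indices $b$ and $d$; the dimensions of such subspaces over all $[i,j] \supseteq [b,d]$ determine the $\beta_{b,d}$ by Möbius inversion, in direct analogy with the rank-function approach for ordinary persistence. Alternatively one may invoke Krull–Schmidt: the endomorphism ring of a finite-dimensional representation of a finite quiver is a finite-dimensional $k$-algebra, hence semiperfect, so indecomposable decompositions are unique up to reordering, and each $\I(b,d)$ has endomorphism ring $k$, which is local, so each $\I(b,d)$ is indecomposable. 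The main obstacle is making the rank-based invariants well-defined in the presence of mixed arrow orientations — the naive composition of all intermediate maps is ill-typed, and the correct substitute (the image-and-preimage ``flow'' subspaces) requires some care to show is independent of the order in which images and preimages are taken along a fixed zigzag segment; this is where the essential new content beyond the ordinary persistence setting lies.
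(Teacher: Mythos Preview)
The paper does not prove this theorem; it is stated as a known result and attributed to Gabriel's classification of representations of quivers of finite type, with a reference to \cite{Gabriel, ZigzagPersistence}. There is therefore no proof in the paper to compare your proposal against.

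That said, your outline is a reasonable sketch of a self-contained argument, and the two ingredients you name (an inductive construction for existence, Krull--Schmidt or a rank-type invariant for uniqueness) are standard routes. The uniqueness half is clean: the endomorphism ring of $\I(b,d)$ is indeed $k$, hence local, and Krull--Schmidt applies directly. The existence half is where the real work hides. The phrase ``careful use of elementary basis changes preserves the existing interval structure'' is exactly the crux: when you adjust the basis of $V_{n-1}$ to put $q_{n-1}$ into a convenient form, those basis changes propagate back through the earlier maps $q_1,\ldots,q_{n-2}$ and can, a priori, destroy the interval decomposition you obtained by induction. Showing that one can always choose the new basis from within the filtration already imposed by the intervals ending at index $n-1$ is possible, but it is not automatic and is the essential content of the type-$A_n$ case. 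The treatments in \cite{ZigzagPersistence} handle this either via reflection functors or via an explicit echelon-form argument that tracks precisely which basis changes are allowed; your sketch would need to supply that argument to be complete.
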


\noindent A barcode is a multiset of intervals of the form $[b, d]$, and the barcode for a zigzag module $ V \cong \oplus_{l=1}^N \I(b_l, d_l)$ is $\bigl\{[b_1, d_1], [b_2, d_2, ], \ldots, [b_N, d_N]\bigr\}$.

Given a fibrewise space $p \colon Y \to I$ and a choice of discretization
$$0 = s_0 < s_1 < \ldots < s_n = 1,$$
we build a zigzag module that models how the homology of fibrewise space $Y$ changes with time. Let $Y_i = p^{-1}(s_i)$ and let $Y_i^{i+1} = p^{-1}([s_i, s_{i+1}])$. We have the zigzag diagram
\begin{equation}\label{E: ZY}
Y_0 \hookrightarrow Y_0^1 \hookleftarrow Y_1 \hookrightarrow \ldots \hookleftarrow Y_{n-1} \hookrightarrow Y_{n-1}^n \hookleftarrow Y_n
\end{equation}
of topological spaces and inclusion maps \cite{Real-valued}. See Figure~\ref{F: cartoonYesNew_4} for an example.
\begin{figure}[h]
\begin{center}
    	\includegraphics[width=4in]{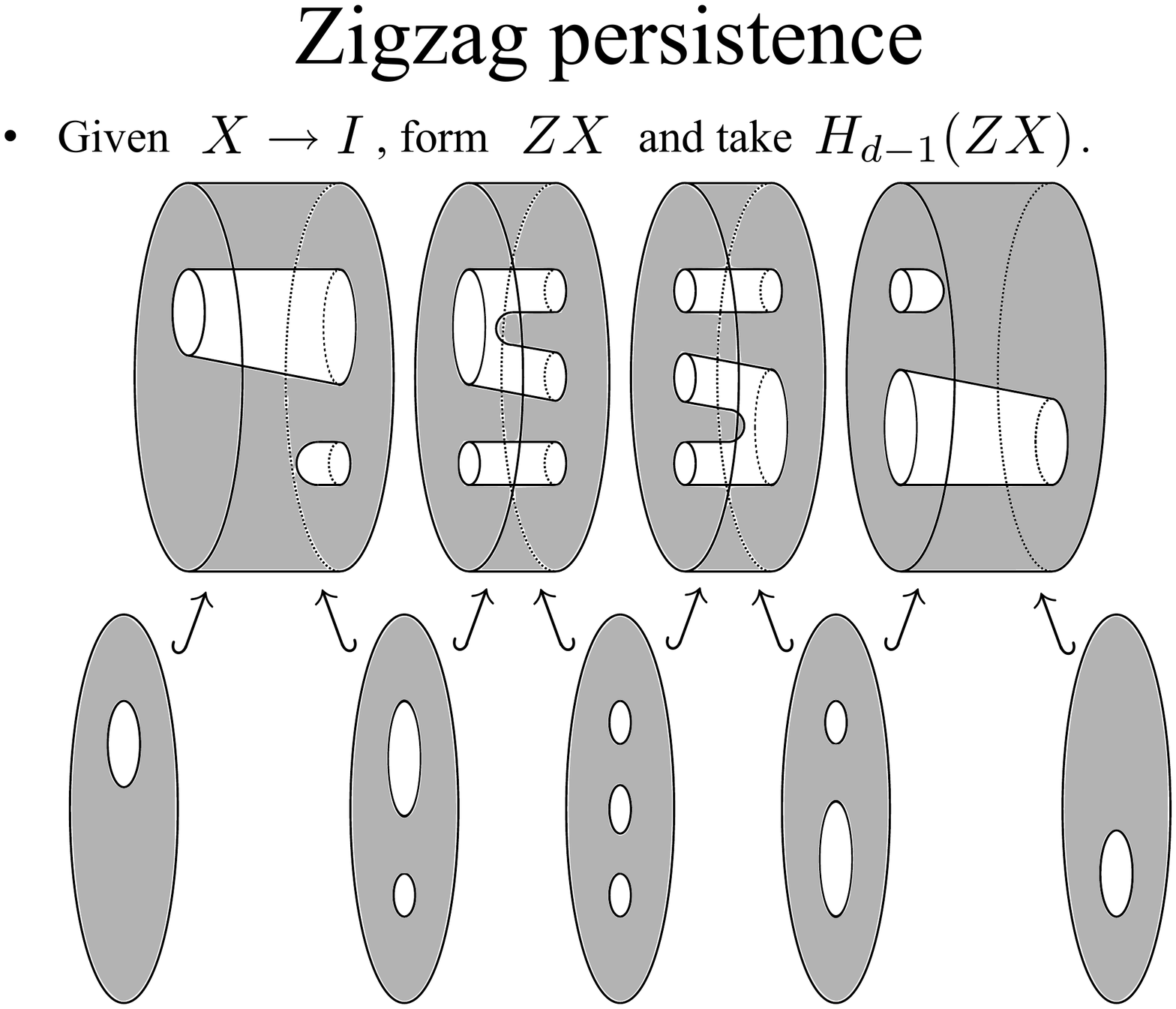}
\end{center}
\caption{A zigzag diagram built from the fibrewise space in Figure~\ref{F: cartoonYes}.}
\label{F: cartoonYesNew_4}
\end{figure}
We assume that each $Y_i$ and $Y_i^{i+1}$ have finite-dimensional homology and cohomology, each taken with coefficients in a field $k$. Applying the $j$-dimensional homology functor $H_j$ to (\ref{E: ZY}) gives the zigzag module
$$H_j(Y_0) \rightarrow H_j(Y_0^1) \leftarrow H_j(Y_1) \rightarrow \ldots \leftarrow H_j(Y_{n-1}) \rightarrow H_j(Y_{n-1}^n) \leftarrow H_j(Y_n).$$
We denote this zigzag module $ZH_j(Y)$, leaving implicit the choice of discretization and the choice of coefficient field. Applying the $j$-dimensional cohomology functor $H^j$ to (\ref{E: ZY}) gives the zigzag module
$$H^j(Y_0) \leftarrow H^j(Y_0^1) \rightarrow H^j(Y_1) \leftarrow \ldots \rightarrow H^j(Y_{n-1}) \leftarrow H^j(Y_{n-1}^n) \rightarrow H^j(Y_n),$$
which we denote $ZH^j(Y)$. Note the directions of the arrows have been reversed because cohomology is contravariant.

The following lemmas will be useful in the proof of Theorem~\ref{T: longBarcodeNecessary}. The first lemma states that zigzag persistent homology and cohomology are invariants of fibrewise homotopy type, and the second lemma states that the barcodes for zigzag persistent homology and cohomology are identical.

\begin{lemma}\label{L: invariance of ZH_j(-) and ZH^j(-)}
If $Y$ and $W$ are fibrewise homotopy equivalent then $ZH_j(Y) \cong ZH_j(W)$ and $ZH^j(Y) \cong ZH^j(W)$.
\end{lemma}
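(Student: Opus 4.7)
The plan is to exploit functoriality: the construction $Y \mapsto ZH_j(Y)$ factors as building a zigzag diagram of topological spaces followed by applying singular homology, and I would argue that both steps send fibrewise homotopy equivalences to isomorphisms. To this end, I would start by fixing fibrewise maps $f \colon Y \to W$ and $f' \colon W \to Y$ witnessing the equivalence, together with fibrewise homotopies $F$ from $f' \circ f$ to $\mathrm{id}_Y$ and $G$ from $f \circ f'$ to $\mathrm{id}_W$.

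First I would restrict $f$ and $f'$ to the slice pieces $Y_i$, $W_i$, $Y_i^{i+1}$, $W_i^{i+1}$ appearing in the zigzag diagram (\ref{E: ZY}). Because $f$ is time-preserving, the preimage $Y_i = p_Y^{-1}(s_i)$ maps under $f$ into $W_i$, and $Y_i^{i+1}$ maps into $W_i^{i+1}$, so $f$ restricts to continuous maps $f_i \colon Y_i \to W_i$ and $f_i^{i+1} \colon Y_i^{i+1} \to W_i^{i+1}$. These restrictions commute strictly with the inclusions in (\ref{E: ZY}), giving a ladder of topological spaces between the two zigzag diagrams in which every square commutes on the nose; an analogous ladder goes back via $f'$.

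Next I would use the fibrewise homotopy $F$ to show these ladders realize homotopy equivalences at each rung. The crucial observation is that each slice $F(-,t)$ being fibrewise means $p_Y(F(y, t)) = p_Y(y)$ for all $y$ and $t$, so $F$ sends $Y_i \times I$ into $Y_i$ and sends $Y_i^{i+1} \times I$ into $Y_i^{i+1}$. Hence $F$ restricts to ordinary (not merely fibrewise) homotopies $f'_i \circ f_i \simeq \mathrm{id}_{Y_i}$ and $(f')_i^{i+1} \circ f_i^{i+1} \simeq \mathrm{id}_{Y_i^{i+1}}$, and the symmetric argument with $G$ handles the compositions in the other order. Applying $H_j$ (or $H^j$) to the commutative ladders then produces a morphism of zigzag modules whose vertical arrows are all isomorphisms by homotopy invariance of singular (co)homology, which is by definition an isomorphism of zigzag modules. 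The cohomology case is identical up to reversing the horizontal arrows.

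I expect this argument to be essentially mechanical once the notation is in place. The one step worth verifying carefully is that $F$ really restricts to a homotopy on the slab $Y_i^{i+1}$ throughout the entire deformation rather than only at its endpoints, but this is forced by the pointwise time-preserving condition baked into the definition of fibrewise homotopy.
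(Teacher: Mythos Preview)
Your proposal is correct and follows essentially the same approach as the paper: restrict the fibrewise homotopy equivalence to the slices and slabs to obtain a commutative ladder of spaces, then apply $H_j$ (or $H^j$) to get a morphism of zigzag modules with all vertical arrows isomorphisms. In fact you are more explicit than the paper, which simply asserts that the restrictions $f_i$ and $f_i^{i+1}$ are homotopy equivalences; your observation that the fibrewise homotopy $F$ restricts to each $Y_i$ and $Y_i^{i+1}$ because every stage $F(\,\cdot\,,t)$ is time-preserving is exactly the justification that step needs.
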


\begin{proof}
Let $f \colon Y \to W$ be a fibrewise homotopy equivalence. This induces the commutative diagram
\begin{center}
\begin{tikzpicture}[description/.style={fill=white,inner sep=2pt}] 
\matrix (m) [matrix of math nodes, row sep=3em, 
column sep=2.8em, text height=1.5ex, text depth=0.25ex] 
{  
Y_0 & Y_0^1 & Y_1 & \ldots & Y_{n-1} & Y_{n-1}^n & Y_n\\
W_0 & W_0^1 & W_1 & \ldots & W_{n-1} & W_{n-1}^n & W_n\\
};
\path[right hook->,font=\scriptsize]
(m-1-1) edge node[auto] {} (m-1-2)
(m-1-3) edge node[auto] {} (m-1-4)
(m-1-5) edge node[auto] {} (m-1-6)
(m-2-1) edge node[auto] {} (m-2-2)
(m-2-3) edge node[auto] {} (m-2-4)
(m-2-5) edge node[auto] {} (m-2-6)
;
\path[left hook->,font=\scriptsize]
(m-1-3) edge node[auto] {} (m-1-2)
(m-1-5) edge node[auto] {} (m-1-4)
(m-1-7) edge node[auto] {} (m-1-6)
(m-2-3) edge node[auto] {} (m-2-2)
(m-2-5) edge node[auto] {} (m-2-4)
(m-2-7) edge node[auto] {} (m-2-6)
;
\path[->,font=\scriptsize]
(m-1-1) edge node[left] {$f_0$} (m-2-1)
(m-1-2) edge node[left] {$f_0^1$} (m-2-2)
(m-1-3) edge node[left] {$f_1$} (m-2-3)
(m-1-5) edge node[left] {$f_{n-1}$} (m-2-5)
(m-1-6) edge node[left] {$f_{n-1}^n$} (m-2-6)
(m-1-7) edge node[left] {$f_n$} (m-2-7)
;
\end{tikzpicture}
\end{center}
where each map $f_i$ or $f_i^{i+1}$ is defined via restriction and is a homotopy equivalence. Since homology is a homotopy invariant, applying $H_j$ gives the commutative diagram
\begin{center}
\begin{tikzpicture}[description/.style={fill=white,inner sep=2pt}] 
\matrix (m) [matrix of math nodes, row sep=3em, 
column sep=1.2em, text height=1.5ex, text depth=0.25ex] 
{  
H_j(Y_0) & H_j(Y_0^1) & H_j(Y_1) & \ldots & H_j(Y_{n-1}) & H_j(Y_{n-1}^n) & H_j(Y_n)\\
H_j(W_0) & H_j(W_0^1) & H_j(W_1) & \ldots & H_j(W_{n-1}) & H_j(W_{n-1}^n) & H_j(W_n)\\
};
\path[->,font=\scriptsize]
(m-1-1) edge node[auto] {} (m-1-2)
(m-1-3) edge node[auto] {} (m-1-4)
(m-1-5) edge node[auto] {} (m-1-6)
(m-2-1) edge node[auto] {} (m-2-2)
(m-2-3) edge node[auto] {} (m-2-4)
(m-2-5) edge node[auto] {} (m-2-6)
;
\path[->,font=\scriptsize]
(m-1-3) edge node[auto] {} (m-1-2)
(m-1-5) edge node[auto] {} (m-1-4)
(m-1-7) edge node[auto] {} (m-1-6)
(m-2-3) edge node[auto] {} (m-2-2)
(m-2-5) edge node[auto] {} (m-2-4)
(m-2-7) edge node[auto] {} (m-2-6)
;
\path[->,font=\scriptsize]
(m-1-1) edge node[left] {} (m-2-1)
(m-1-2) edge node[left] {} (m-2-2)
(m-1-3) edge node[left] {} (m-2-3)
(m-1-5) edge node[left] {} (m-2-5)
(m-1-6) edge node[left] {} (m-2-6)
(m-1-7) edge node[left] {} (m-2-7)
;
\end{tikzpicture}
\end{center}
in which each vertical map is an isomorphism. Hence $ZH_j(Y) \cong ZH_j(W)$. The proof for cohomology is analogous.
\end{proof}

\begin{lemma}\label{L: naturality of (co)homology duality}
The barcodes for $ZH_j(Y)$ and $ZH^j(Y)$ are equal as multisets of intervals.
\end{lemma}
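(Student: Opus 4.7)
The plan is to identify $ZH^j(Y)$ as the ``pointwise dual'' of $ZH_j(Y)$ and then show that this dualization preserves the barcode. Since we work over a field $k$ and have assumed all (co)homology vector spaces are finite dimensional, the universal coefficient theorem yields natural isomorphisms $H^j(Z) \cong \Hom_k(H_j(Z), k)$ for each space $Z$ appearing in the diagram. Moreover, for any continuous map $f \colon Z \to Z'$, the induced homomorphism $f^* \colon H^j(Z') \to H^j(Z)$ corresponds under these isomorphisms to the transpose of $f_* \colon H_j(Z) \to H_j(Z')$. Applying this to each inclusion in (\ref{E: ZY}) shows that $ZH^j(Y)$ is obtained from $ZH_j(Y)$ by dualizing every vector space and transposing every map, which automatically reverses the direction of every arrow and hence produces exactly the zigzag pattern $\leftarrow \rightarrow \leftarrow \rightarrow \ldots$ displayed above.

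It therefore suffices to prove the following general statement: if $V$ is any zigzag module, then its dual $V^*$ (obtained by dualizing each vector space, transposing each map, and reversing each arrow) has the same barcode as $V$. Invoking Theorem~\ref{T: Gabriel}, we decompose $V \cong \oplus_{l=1}^N \I(b_l, d_l)$. Because $(-)^* = \Hom_k(-, k)$ is an exact contravariant functor on finite-dimensional vector spaces that sends direct sums to direct sums, we obtain $V^* \cong \oplus_{l=1}^N \I(b_l, d_l)^*$. A direct inspection of $\I(b,d)^*$ shows that it is again an interval module on the opposite zigzag diagram, with copies of $k^* \cong k$ in positions $b$ through $d$ and transposed identity maps (which remain identities) between adjacent slots; in particular its barcode interval is still $[b,d]$. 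Hence the multisets of intervals for $V$ and $V^*$ agree, and the lemma follows.

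The only non-routine point is ensuring that the universal coefficient identification $H^j \cong \Hom_k(H_j, k)$ is natural in the topological space — that is, that it genuinely turns the cohomology zigzag into the vector-space dual of the homology zigzag. This naturality is standard over a field, where the $\Ext$ term in the UCT vanishes and the comparison is induced by the evaluation pairing on the chain level; the finite-dimensionality assumption guarantees that the double-dual identification used implicitly in forming $V^*$ is unproblematic.
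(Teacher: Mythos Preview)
Your proof is correct and follows essentially the same route as the paper: apply the natural Universal Coefficient isomorphism $H^j \cong \Hom_k(H_j,k)$ over the field $k$ to identify $ZH^j(Y)$ with the pointwise dual of $ZH_j(Y)$, decompose via Theorem~\ref{T: Gabriel}, and observe that $\Hom_k(-,k)$ sends each interval summand $\I(b_l,d_l)$ to an interval summand with the same endpoints on the opposite zigzag. The paper's version is terser, but the argument is the same.
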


\begin{proof}
The version of this lemma with persistent homology instead of zigzag persistence is given in Proposition~2.3 of \cite{Dualities}, and our proof is analogous. Because their arrows point in different directions, the zigzag modules $ZH_j(Y)$ and $ZH^j(Y)$ live in different categories and cannot be isomorphic. However, consider the decomposition
$$ ZH_j(Y) \cong \oplus_{l=1}^N \I(b_l, d_l)$$
from Theorem~\ref{T: Gabriel}. Applying the contravariant functor $\Hom(\quad; k)$ produces the decomposition
$$\Hom(ZH_j(Y);k) \cong \oplus_{l=1}^N \I(b_l, d_l),$$
where the directions of the arrows in the zigzag modules have been reversed. Naturality of the Universal Coefficient Theorem \cite[Theorem~3.2]{Hatcher} with coefficients in a field gives $ZH^j(Y) \cong \Hom(ZH_j(Y);k)$, and hence the barcodes for $ZH_j(Y)$ and $ZH^j(Y)$ are equal as multisets of intervals.
\end{proof}

\section{Applying Zigzag Persistence to the Evasion Problem}\label{S: Applying Zigzag Persistence to the Evasion Problem}

We began studying the evasion problem with the goal of finding an if-and-only-if criterion for the existence of an evasion path using zigzag persistence, which in this setting describes how the homology of the region covered by sensors changes with time.

\begin{figure}[h]
\begin{center}
    	\includegraphics[width=6in]{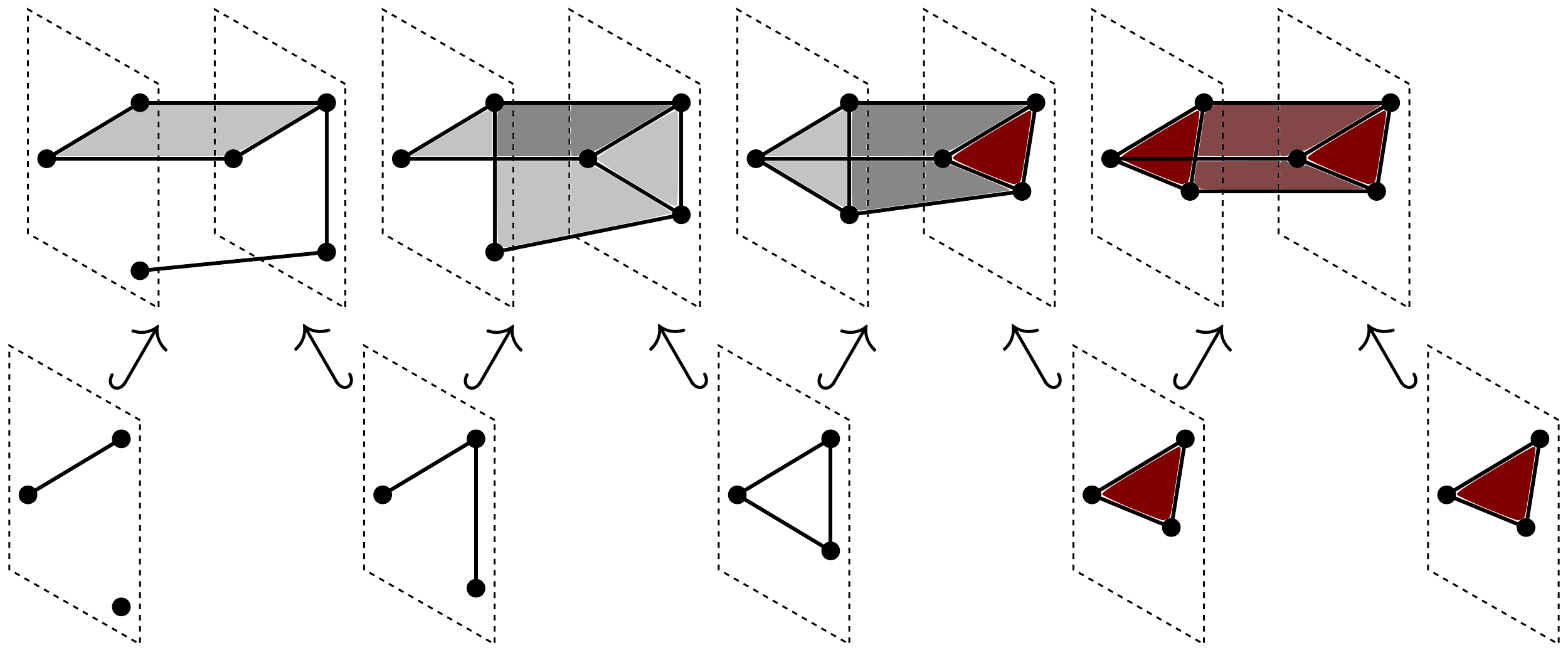}
\end{center}
\caption{The zigzag diagram for the stacked \u Cech complex from the three sensor nodes in Figure~\ref{F: stackedCech}.}
\label{F: stackedCechZigzag_arrows}
\end{figure}

Consider the times $0 < t_1 < \ldots < t_n < 1$
when the \u Cech complex changes and choose interleaving times
$$0 = s_0 < t_1 < s_1 < \ldots < t_n < s_n = 1.$$
We saw in Section~\ref{S: Zigzag Persistence} that the discretization $0 = s_0 < s_1 < \ldots < s_n = 1$ produces a zigzag diagram of spaces from any fibrewise space, and we will consider the fibrewise spaces $X$, $X^c$, and $SC$. Figure~\ref{F: stackedCechZigzag_arrows} depicts the zigzag diagram built from a stacked \u Cech complex $SC$.

\begin{lemma}\label{L: ZH_j(X) is ZH^j(SC)}
For $X$ the region of spacetime covered by sensors and $SC$ the stacked \u Cech complex, we have $ZH_j(X) \cong ZH_j(SC)$.
\end{lemma}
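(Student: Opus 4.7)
My plan is to exhibit $X$ and $SC$ as fibrewise homotopy equivalent, since Lemma~\ref{L: invariance of ZH_j(-) and ZH^j(-)} then gives $ZH_j(X) \cong ZH_j(SC)$ for free. Equivalently, I would construct, for each piece of the zigzag diagram, a homotopy equivalence between the corresponding fiber-spaces of $X$ and of $SC$, and check that these equivalences make the inclusion squares commute; applying $H_j$ would then yield an isomorphism of zigzag modules directly.

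The main tool will be two instances of the nerve lemma. First, at each fiber $s_i$, the closed balls $\{B_{v(s_i)}\}_{v \in S}$ form a convex cover of $X_i = X(s_i)$ whose nonempty finite intersections are convex, hence contractible; the nerve lemma gives $X_i \simeq C(s_i) = SC_i$. Second, for each interval $[s_i, s_{i+1}]$ I would cover $X_i^{i+1} \subset \xD \times I$ by the spacetime ``tubes'' $\widetilde{B}_v = \bigcup_{t \in [s_i, s_{i+1}]} B_{v(t)} \times \{t\}$. Each $\widetilde{B}_v$ deformation retracts to the path $v([s_i, s_{i+1}])$, so it is contractible. A finite intersection $\widetilde{B}_{v_0} \cap \cdots \cap \widetilde{B}_{v_k}$ slices at each time $t$ to a convex (possibly empty) set, and the set of $t \in [s_i, s_{i+1}]$ for which this slice is nonempty is either all of $[s_i, s_{i+1}]$, or $[s_i, t_{i+1}]$, or $[t_{i+1}, s_{i+1}]$, since by assumption the Cech complex changes at most at the single time $t_{i+1}$ inside this interval; in each case the intersection is a contractible spacetime tube. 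A parametrized nerve lemma therefore gives $X_i^{i+1} \simeq N$, where $N$ is the simplicial complex with simplex $[v_0\ldots v_k]$ whenever the corresponding balls intersect for some $t\in[s_i,s_{i+1}]$. By the added/removed dichotomy, $N = C(s_i) \cup C(s_{i+1}) = C(t_{i+1})$. On the other side, the two prisms $C(s_i) \times [s_i, t_{i+1}]$ and $C(s_{i+1}) \times [t_{i+1}, s_{i+1}]$ that together make up $SC_i^{i+1}$ deformation retract onto their shared fiber $C(t_{i+1})$, so $SC_i^{i+1} \simeq C(t_{i+1})$ as well.

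Once these homotopy equivalences are in place, I would verify that they are natural with respect to the inclusions $X_i \hookrightarrow X_i^{i+1} \hookleftarrow X_{i+1}$ and $SC_i \hookrightarrow SC_i^{i+1} \hookleftarrow SC_{i+1}$. The standard nerve map, built from a partition of unity subordinate to the cover, is functorial under inclusions of subcovers, so each square in the comparison of the two zigzag diagrams commutes up to homotopy. Applying $H_j$ turns this levelwise homotopy equivalence into a morphism of zigzag modules whose levelwise components are isomorphisms, which is exactly an isomorphism $ZH_j(X) \cong ZH_j(SC)$.

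The hardest part is verifying the parametrized nerve lemma for the cover of $X_i^{i+1}$ and obtaining strict (not merely up-to-homotopy) commutativity of the comparison squares; everything else is bookkeeping. This difficulty can be circumvented by promoting the argument to a genuine fibrewise homotopy equivalence $X \simeq SC$ built from the same tube cover over all of $I$, and then invoking Lemma~\ref{L: invariance of ZH_j(-) and ZH^j(-)} in one stroke.
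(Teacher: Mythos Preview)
Your proposal is correct and follows essentially the same approach as the paper's proof, which simply asserts the commutative ladder of homotopy equivalences ``by the nerve lemma'' and then applies $H_j$ exactly as in the proof of Lemma~\ref{L: invariance of ZH_j(-) and ZH^j(-)}. You supply considerably more detail than the paper does---in particular the spacetime tube cover of $X_i^{i+1}$, the identification of its nerve with $C(t_{i+1})$, and the naturality check---but the underlying strategy is identical.
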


\begin{proof}
By the nerve lemma we have the following commutative diagram with each vertical arrow a homotopy equivalence.
\begin{center}
\begin{tikzpicture}[description/.style={fill=white,inner sep=2pt}] 
\matrix (m) [matrix of math nodes, row sep=3em, 
column sep=2.8em, text height=1.5ex, text depth=0.25ex] 
{  
SC_0 & SC_0^1 & SC_1 & \ldots & SC_{n-1} & SC_{n-1}^n & SC_n\\
X_0 & X_0^1 & X_1 & \ldots & X_{n-1} & X_{n-1}^n & X_n\\
};
\path[right hook->,font=\scriptsize]
(m-1-1) edge node[auto] {} (m-1-2)
(m-1-3) edge node[auto] {} (m-1-4)
(m-1-5) edge node[auto] {} (m-1-6)
(m-2-1) edge node[auto] {} (m-2-2)
(m-2-3) edge node[auto] {} (m-2-4)
(m-2-5) edge node[auto] {} (m-2-6)
;
\path[left hook->,font=\scriptsize]
(m-1-3) edge node[auto] {} (m-1-2)
(m-1-5) edge node[auto] {} (m-1-4)
(m-1-7) edge node[auto] {} (m-1-6)
(m-2-3) edge node[auto] {} (m-2-2)
(m-2-5) edge node[auto] {} (m-2-4)
(m-2-7) edge node[auto] {} (m-2-6)
;
\path[->,font=\scriptsize]
(m-1-1) edge node[left] {} (m-2-1)
(m-1-2) edge node[left] {} (m-2-2)
(m-1-3) edge node[left] {} (m-2-3)
(m-1-5) edge node[left] {} (m-2-5)
(m-1-6) edge node[left] {} (m-2-6)
(m-1-7) edge node[left] {} (m-2-7)
;
\end{tikzpicture}
\end{center}
The remainder of the proof is identical to the proof of Lemma~\ref{L: invariance of ZH_j(-) and ZH^j(-)}.
\end{proof}

Our initial hypothesis was that an evasion path would exist in a sensor network if and only if there were a full-length interval $[1, 2n+1]$ in the barcode for $ZH_{d-1}(SC)$. For example, network (a) in Figure~\ref{F: cartoonYesTeleport_barcode} has both an evasion path and a full-length interval, and network (b) has neither. Only one direction of this hypothesis is true.

\begin{figure}[h]
\begin{center}
	\begin{subfigure}[t]{2.2in}
		\includegraphics[width=2.2in]{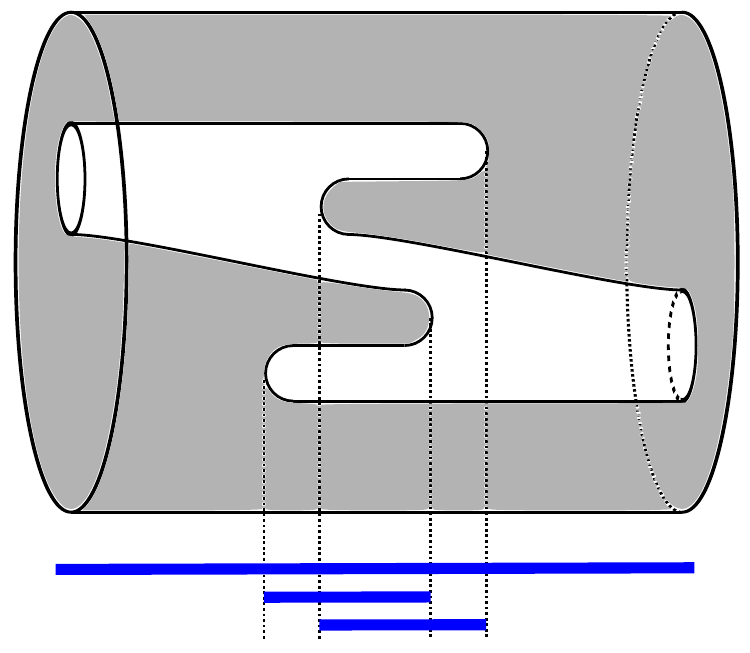}
		\caption{}
	\end{subfigure}
	\hspace{10mm}
	\begin{subfigure}[t]{2.2in}
		\includegraphics[width=2.2in]{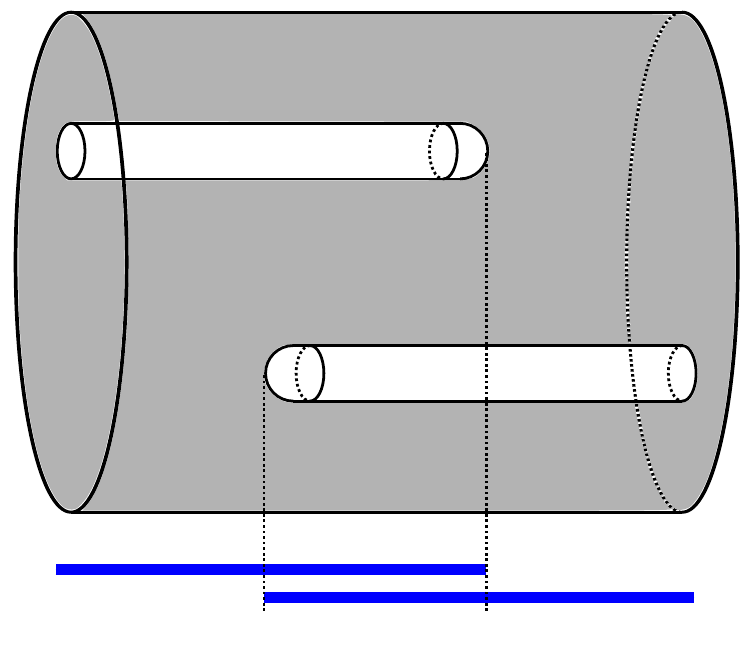}
		\caption{}
	\end{subfigure}
\end{center}
\caption{Two planar sensor networks and their barcode decompositions for $ZH_1(X)$.}
\label{F: cartoonYesTeleport_barcode}
\end{figure}

\begin{theorem}\label{T: longBarcodeNecessary} 
If there is an evasion path in a sensor network, then there is a full-length interval $[1, 2n+1]$ in the zigzag barcode for $ZH_{d-1}(SC)$.
\end{theorem}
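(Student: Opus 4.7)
The plan is to realize the interval module $I(1,2n+1)$ as a direct summand of $ZH_{d-1}(SC)$; Gabriel's Theorem~\ref{T: Gabriel} then forces $[1,2n+1]$ into the barcode. Thanks to Lemma~\ref{L: ZH_j(X) is ZH^j(SC)} it suffices to work with $X$ in place of $SC$. My strategy is to construct a morphism $f:I(1,2n+1)\to ZH_{d-1}(X)$ together with a retraction $r:ZH_{d-1}(X)\to I(1,2n+1)$ for which $r\circ f=\1$, so that $f$ is a split monomorphism and $I(1,2n+1)$ appears as a summand.

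The map $f$ is the easy half. Since the fence is immobile and $F\simeq\partial\xD\simeq S^{d-1}$, the zigzag module $ZH_{d-1}(F\times I)$ is canonically isomorphic to $I(1,2n+1)$, and the inclusion $F\times I\hookrightarrow X$ induces $f$ whose value at each slot is the pushforward of the fundamental class of the fence into $H_{d-1}(X_i)$ or $H_{d-1}(X_i^{i+1})$.

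The retraction $r$ is built from the evasion path. Let $\sigma=\{(s(t),t):t\in I\}\subset\xD\times I$ be the graph of $s$. Because $s(t)\notin X(t)$, the arc $\sigma$ is disjoint from $X$, so $X\subset (\xD\times I)\setminus\sigma$. The key geometric input is the identification of this ambient complement: the $(d+1)$-ball $\xD\times I$ minus the properly embedded arc $\sigma$ deformation retracts onto $\partial(\xD\times I)$ minus the two endpoints of $\sigma$, which in turn deformation retracts onto $\partial\xD\times I\simeq S^{d-1}$. In particular $H^{d-1}((\xD\times I)\setminus\sigma)\cong k$ and the inclusion $F\times I\hookrightarrow (\xD\times I)\setminus\sigma$ is a homotopy equivalence. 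Fix a generator $\omega$ of $H^{d-1}((\xD\times I)\setminus\sigma)$, let $\phi=\iota^*\omega\in H^{d-1}(X)$ where $\iota\colon X\hookrightarrow (\xD\times I)\setminus\sigma$, and restrict fibrewise. By naturality of cohomological restriction the classes $\phi|_{X_i}$ and $\phi|_{X_i^{i+1}}$ form a compatible family, and the evaluations $\alpha\mapsto\langle\phi|_{X_i},\alpha\rangle$ assemble into a morphism of zigzag modules $r:ZH_{d-1}(X)\to I(1,2n+1)$.

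Finally I must verify $r\circ f=\1$, i.e.\ that $\phi$ pairs to $1$ with every fence class. At each position this reduces by naturality to pairing $\omega$ with the image of the fundamental class of the fence in $H_{d-1}((\xD\times I)\setminus\sigma)\cong H_{d-1}(S^{d-1})$, and the homotopy equivalence of the previous paragraph sends the fence's fundamental class to the fundamental class of $S^{d-1}$, so the pairing is $\pm 1$ at every slot and can be normalized to $1$ consistently. The hard part here is the uniformity across all $2n+1$ slots: a slot-by-slot choice of generators only yields a compatible family of cohomology classes, and compatibility alone is \emph{not} enough to force a full-length bar in a zigzag decomposition. The proof goes through because $\phi$ is pulled back from a \emph{single} global class on $(\xD\times I)\setminus\sigma$, which is the geometric avatar of the evasion path. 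Once $r\circ f=\1$ is in place, $f$ splits, $I(1,2n+1)$ appears as a summand of $ZH_{d-1}(X)\cong ZH_{d-1}(SC)$, and the barcode contains the full-length interval $[1,2n+1]$.
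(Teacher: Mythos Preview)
Your argument is correct and follows a genuinely different route from the paper's. The paper works on the complement side: it observes that the section $s\colon I\to X^c$ makes the identity on $ZH_0(I)\cong\I(1,2n+1)$ factor through $ZH_0(X^c)$, so the splitting lemma puts a full-length bar into $ZH_0(X^c)$; it then invokes a zigzag Alexander duality result (Theorem~3.11 of \cite{Kalisnik}, together with the Diamond Principle and Lemma~\ref{L: naturality of (co)homology duality}) to transfer that bar to $ZH_{d-1}(X)$, and finishes with Lemma~\ref{L: ZH_j(X) is ZH^j(SC)}. Your proof instead builds the splitting directly in $ZH_{d-1}(X)$: the fence furnishes the inclusion $f$, and the evasion path furnishes the retraction $r$ via a single global cohomology class on $(\xD\times I)\setminus\sigma$, which you then pair against homology slot by slot.

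In effect you are carrying out the Alexander duality step by hand, trading a black-box citation for an explicit geometric construction. What your approach buys is self-containment: you never need the zigzag duality theorem, and your discussion of why a \emph{global} class is essential (as opposed to a merely compatible family) nicely isolates the point where the evasion path enters. What the paper's approach buys is brevity and modularity: once zigzag Alexander duality is available, the $ZH_0$ argument is a two-line application of the splitting lemma, and the geometric input (unknottedness of the graph $\sigma$ in the $(d+1)$-ball, the deformation retraction of the arc complement onto $\partial\xD\times I$) is absorbed into the cited duality result rather than argued directly.
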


\begin{proof}
An evasion path is a section $s \colon I \to X^c$, that is, a commutative diagram
\begin{center}
\begin{tikzpicture}[description/.style={fill=white,inner sep=2pt}] 
\matrix (m) [matrix of math nodes, row sep=3em, 
column sep=2.5em, text height=1.5ex, text depth=0.25ex] 
{  
I	&X^c		&I\\};
\path[->, font=\scriptsize]
(m-1-1) edge node[auto] {$ s $} (m-1-2)
(m-1-2) edge node[auto] {$ p $} (m-1-3)
;
\path[->, font=\scriptsize, bend right]
(m-1-1) edge node[below] {$ \1$} (m-1-3)
;
\end{tikzpicture}
\end{center}
with $\1$ the identity map. Applying zigzag homology $ZH_0$ gives the following commutative diagram.
\begin{center}
\begin{tikzpicture}[description/.style={fill=white,inner sep=2pt}] 
\matrix (m) [matrix of math nodes, row sep=3em, 
column sep=2.5em, text height=1.5ex, text depth=0.25ex] 
{  
ZH_0(I)	&ZH_0(X^c)	&ZH_0(I)\\};
\path[->, font=\scriptsize]
(m-1-1) edge node[above] {$ZH_0(s)$} (m-1-2)
(m-1-2) edge node[above] {$ZH_0(p)$} (m-1-3)
;
\path[->, font=\scriptsize, bend right]
(m-1-1) edge node[below] {$\1$} (m-1-3)
;
\end{tikzpicture}
\end{center}
Since the identity map on $ZH_0(I) \cong \I(1, 2n+1)$ factors through $ZH_0(X^c)$, the splitting lemma \cite[Section~2.2]{Hatcher}
implies that the barcode decomposition for $ZH_0(Y)$ contains a summand isomorphic to $\I(1, 2n+1)$. Hence there is a full-length interval $[1, 2n+1]$ in the barcode for $ZH_0(X^c)$. Next we need a version of Alexander Duality \cite[Theorem~3.44]{Hatcher}. We apply Theorem~3.11 of \cite{Kalisnik}, which uses the Diamond Principle of \cite{ZigzagPersistence} and our Lemma~\ref{L: naturality of (co)homology duality}, to obtain a full-length interval in $ZH_{d-1}(X)$. Since $ZH_{d-1}(X) \cong ZH_{d-1}(SC)$ by Lemma~\ref{L: ZH_j(X) is ZH^j(SC)}, the proof is complete.
\end{proof}

\begin{remark}
This theorem is as discerning as the reformulated version of Theorem~7 of \cite{Coordinate-free}. That is, one theorem can be used to prove that no evasion path exists in a sensor network if and only if the other theorem can be used. However, suppose that the sensors move for a long period of time. In this case the amalgamated complex used in Corollary~3 of \cite{Coordinate-free} to compute their homological criterion may become quite large. By contrast, the algorithm for computing zigzag persistence runs in a streaming fashion that does not require storing the sensor network across all times simultaneously \cite{Real-valued}. Hence computing our Theorem~\ref{T: longBarcodeNecessary} may be more feasible for sensors moving over a long period of time.
\end{remark}

Interestingly, the converse to Theorem~\ref{T: longBarcodeNecessary} is false. This is demonstrated by the sensor network in Figure~\ref{F: cartoonNo_explanation}(a). It is tempting to guess that the barcode for this network consists of the intervals drawn on top in black, but they are crossed out because they are incorrect. The correct barcode beneath contains a full-length interval $[1, 2n+1]$ even though there is no evasion path. We explain this counterintuitive barcode in Figure~\ref{F: cartoonNo_explanation}(b), and we give a second explanation in Section~\ref{S: Dependence on the Embedding}.

\begin{figure}[h]
\begin{center}
	\begin{subfigure}[t]{2.2in}
		\includegraphics[width=2.2in]{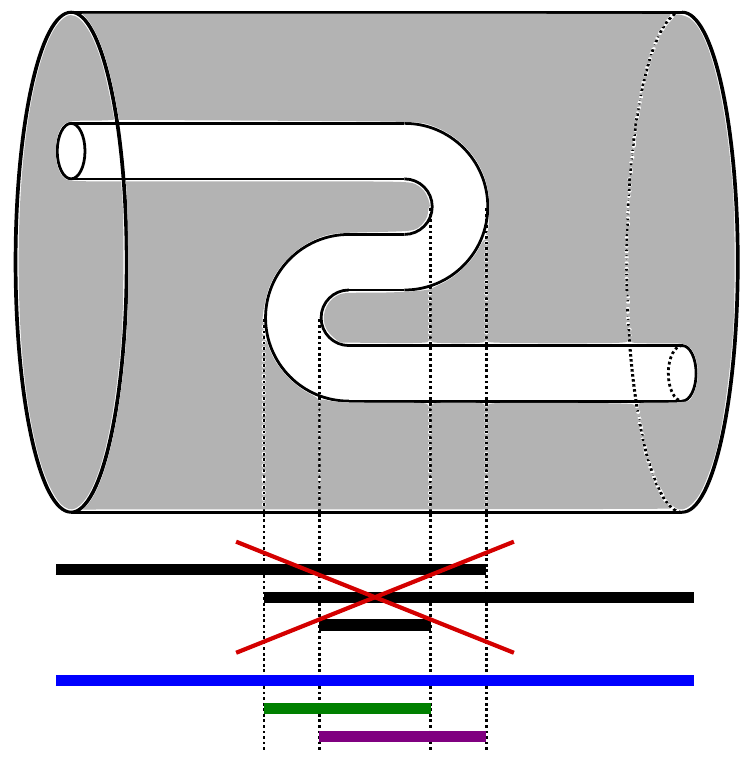}
		\caption{}
	\end{subfigure}
	\begin{subfigure}[t]{6in}
		\includegraphics[width=6in]{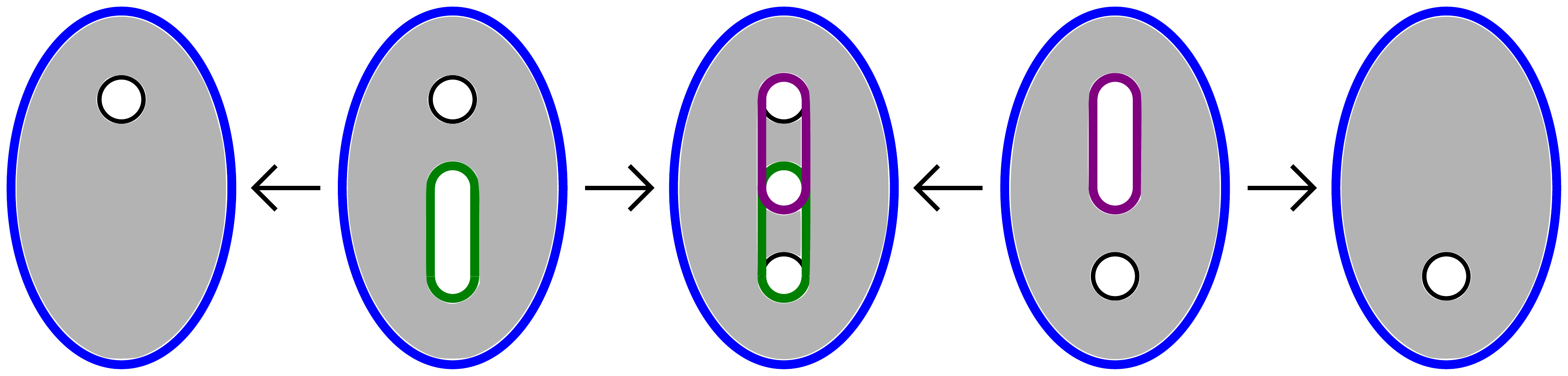}
		\caption{}
	\end{subfigure}
\end{center}
\caption{(a) It is tempting to guess that the barcode for $ZH_1(X)$ consists of the crossed-out intervals on top in black, but instead the correct barcode is drawn beneath in blue, green, and purple. Note there is a full-length interval $[1, 2n+1]$ even though there is no evasion path in this network. (b) A coarsened version of the zigzag diagram for $X$. The cycles drawn in blue, green, and purple are generators for the three intervals in $ZH_1(X)$.}
\label{F: cartoonNo_explanation}
\end{figure}

Caution~2.9 from \cite{ZigzagPersistence} explains that although every submodule isomorphic to an interval in a persistent homology module corresponds to a direct summand, the same is not true for zigzag modules. The sensor networks in Figures~\ref{F: cartoonYesTeleport_barcode}(b) and \ref{F: cartoonNo_explanation}(a) are good examples of this caution. The zigzag modules for both sensor networks have a submodule isomorphic to the full-length interval module $\I(1, 2n+1)$, but Figure~\ref{F: cartoonNo_explanation}(a) contains $\I(1, 2n+1)$ as a summand whereas Figure~\ref{F: cartoonYesTeleport_barcode}(b) does not.

\FloatBarrier
\section{Dependence on the Embedding}\label{S: Dependence on the Embedding}

It turns out that the answer to the evasion problem is no: in general, neither the time-varying \u Cech complex $C(t)$ of a sensor network nor the fibrewise homotopy type of covered region $X$ determine if an evasion path exists. The ambient isotopy class of the fibrewise embedding of $X$ into spacetime $\xD \times I$ also matters.

\begin{figure}[h]
\begin{center}
	\begin{subfigure}[t]{6in}
		\includegraphics[width=6in]{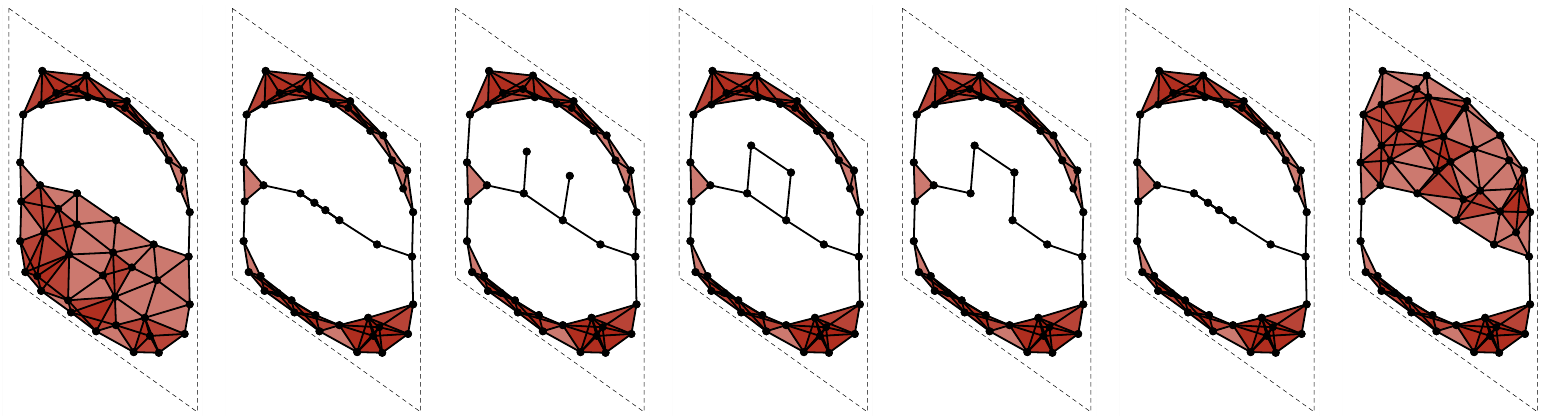}
	\end{subfigure}	
	\begin{subfigure}[t]{2.0in}		
		\includegraphics[width=2.0in]{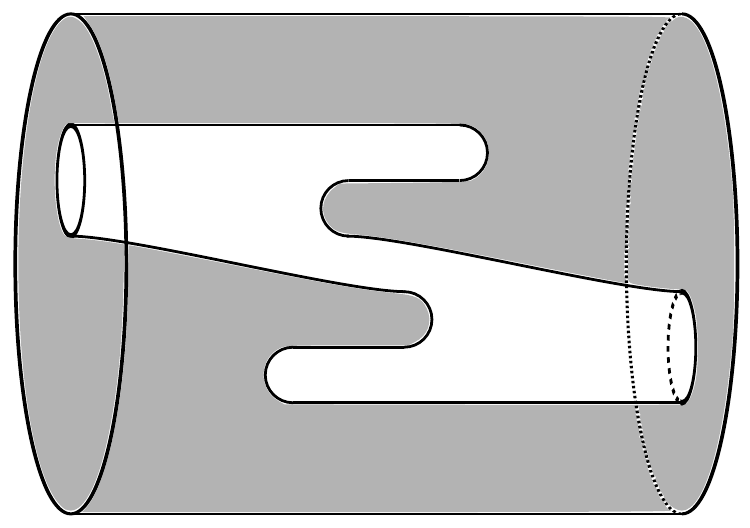}
		\caption{}	
	\end{subfigure}
	\begin{subfigure}[t]{6in}
		\includegraphics[width=6in]{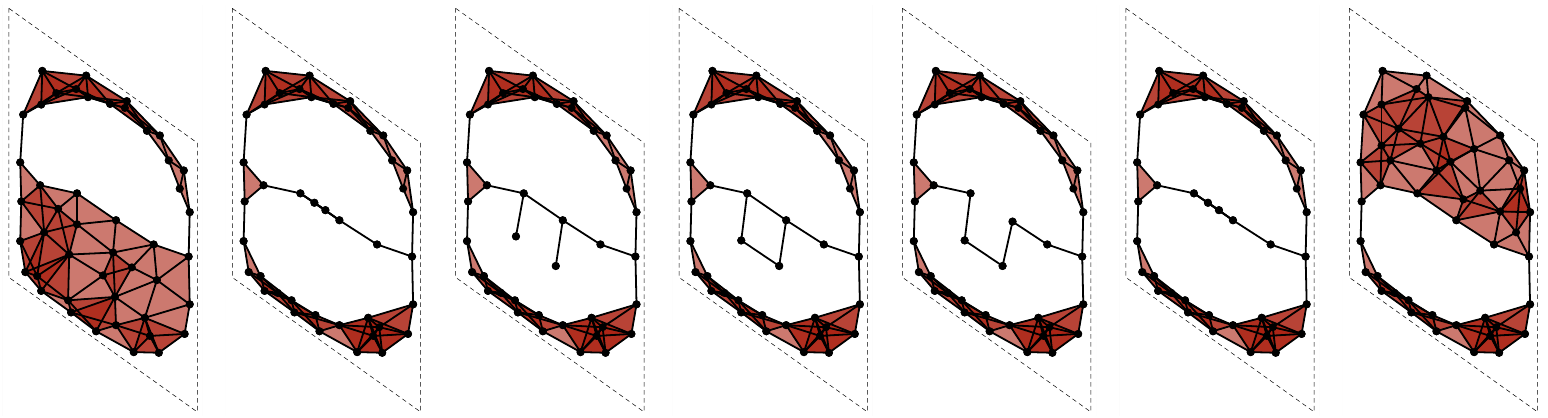}
	\end{subfigure}
	\begin{subfigure}[t]{2.0in}
		\includegraphics[width=2.0in]{cartoonNo.pdf}
		\caption{}
	\end{subfigure}
\end{center}
\caption{Each subfigure is a sensor network represented both as seven sequential \u Cech complexes and as a covered region $X$ in spacetime $\xD \times I$. At each time $t \in I$ the \u Cech complexes $C(t)$ in (a) and (b) are identical. Moreover, the two covered regions are fibrewise homotopy equivalent. Nevertheless, network (a) contains an evasion path, but network (b) does not because the intruder cannot travel backwards in time. See \href{http://www.ima.umn.edu/~henrya/research/DependenceOnTheEmbedding.avi}{Extension~1} for a video of these sensor networks.}
\label{F: cartoonYesNo}
\end{figure}

We demonstrate this impossibility result using the planar sensor networks (a) and (b) in Figure~\ref{F: cartoonYesNo}. Let us describe network (a). Initially, the bottom half of domain $\xD$ is covered by sensors. These sensors retreat to the boundary, leaving a horizontal line of sensors. Two sensors on this line jut out towards the top of $\xD$, forming three sides of a square. These two sensors move closer together, completing the square. The bottom two sensors in this square move apart, breaking the bottom edge of the square. The curvy line of sensors straightens out. Finally, sensors flood from the boundary to cover the top half of $\xD$. Network (b) is identical to (a) except that the square opens towards the bottom of $\xD$. For a video of these sensor networks, see \href{http://www.ima.umn.edu/~henrya/research/DependenceOnTheEmbedding.avi}{Extension~1}.

The time-varying \u Cech complexes $C(t)$ for the two networks are identical, but network (a) has an evasion path while network (b) does not. Furthermore, the covered regions for these two networks are fibrewise homotopy equivalent, and the stacked \u Cech complexes are fibrewise homeomorphic. However, the uncovered regions for the networks are not fibrewise homotopy equivalent, and in particular (a) has a section while (b) does not. Thus the existence of an evasion path depends not only on the fibrewise homotopy type of the sensor network but also on how the sensor network is fibrewise embedded in spacetime $\xD \times I$. Morover, in domain $\xD \subset \R^d$ for any $d \geq 2$ there exists a pair of analogous sensor networks\footnote{Analogous examples in $\xD \subset \R^1$ require the sensors to turn off and then back on.}.

\begin{remark}
In the static setting in which the sensors do not move, one can use the \u Cech complex to determine if the sensors cover the entire domain $\xD$. However, in the setting of mobile sensors, the time-varying \u Cech complex does not in general determine if there exists an evasion path or not.
\end{remark}

In Section~\ref{S: Applying Zigzag Persistence to the Evasion Problem} we promised a second explanation for the counterintuitive zigzag barcode in Figure~\ref{F: cartoonNo_explanation}(a), which we give now. The sensor network in Figure~\ref{F: cartoonNo_explanation}(a) is the same as the network in Figure~\ref{F: cartoonYesNo}(b), whose covered region is fibrewise homotopy equivalent to the covered region for the network in Figure~\ref{F: cartoonYesNo}(a). Hence by Lemma~\ref{L: invariance of ZH_j(-) and ZH^j(-)} their zigzag barcodes must be equal, and the zigzag barcodes for the network in Figure~\ref{F: cartoonYesNo}(a) are shown in Figure~\ref{F: cartoonYesTeleport_barcode}(a).

Embeddings are a central theme in topology. For topological spaces $X$ and $Y$, an embedding $f \colon X \hookrightarrow Y$ maps $X$ injectively and homeomorphically onto its image. A typical goal is to classify the space of embeddings up to some notion of equivalence, such as isotopy or ambient isotopy, and the difficulty of this task depends heavily on the spaces $X$ and $Y$. Knot theory considers the case when $X$ is the circle and $Y = \R^3$, and higher dimensional analogues are even more complicated. However, for some choices of manifolds $X$ and $Y$, homotopy based classifications for the space of embeddings do exist \cite{Whitney, Haefliger, Adachi}. For the evasion problem (and also its natural extension in which one would like to describe not only whether an evasion path exists but also the entire space of evasion paths), it would be useful to have extensions of embedding theory both to the setting of non-manifold spaces and to the setting of fibrewise spaces. One possibility is to try to adapt the tools of embedding calculus \cite{Embeddings} to a fibrewise setting.

\FloatBarrier
\section{Sensors Measuring Cyclic Orderings}\label{S: Sensors Measuring Cyclic Orderings}

Since neither the time-varying \u Cech complex nor the fibrewise homotopy type of covered region $X$ are sufficient to determine if an evasion path exists, what minimal sensing capabilities might we add? In this section we assume the sensors live in a planar domain $\xD \subset \R^2$ and that each sensor measures the cyclic ordering of its neighbors, as in \cite{SurroundingNodes}. It is not uncommon for sensors to measure this weak angular data, for example by performing circular radar sweeps. In Theorem~\ref{T: Stronger sensors} we give necessary and sufficient conditions for the existence of an evasion path based on this rotation information.

Theorem~\ref{T: Stronger sensors} relies on the alpha complex of the sensors. Let $V_{v(t)}$ be the Voronoi cell
$$V_{v(t)} = \{ y \in \xD\ |\ \|v(t) - y\| \leq \|\tilde{v}(t) - y\| \mbox{ for all } \tilde{v} \in S\}$$
of all points in $\xD$ closest to sensor $v$ at time $t$. The alpha complex $A(t)$ is the nerve of the convex sets $\{ B_{v(t)} \cap V_{v(t)} \}_{v \in S}$ \cite{AlphaShapes, ComputationalTopology}. It is a subcomplex of both the \u Cech complex and of the Delaunay triangulation, and is homotopy equivalent to the \u Cech complex and to the union of the sensor balls. For points in general position the 1-skeleton of the alpha complex is embedded in the plane, though the 1-skeleton of the \u Cech complex need not be. Recovering the alpha complex instead of the \u Cech complex requires significantly stronger sensors. However, if each sensor measures the local distances to its overlapping neighbors, which may be approximated by time-of-flight, then this data determines the alpha complex \cite{FayedMouftah}.

\begin{figure}[h]
\begin{center}
	\begin{subfigure}[t]{3.2in}
		\includegraphics[width=3.2in]{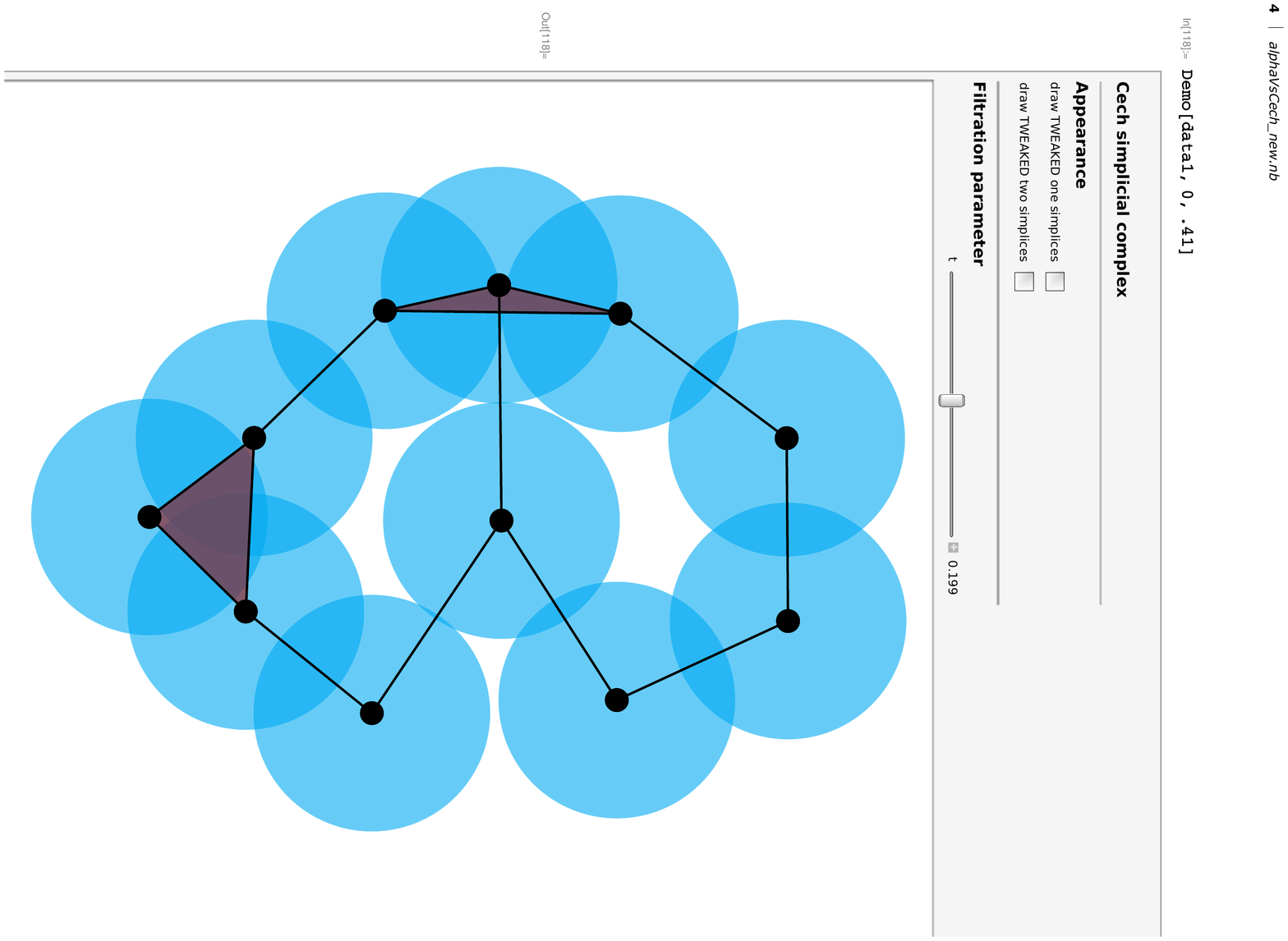}
		\caption{\u Cech complex}
	\end{subfigure}
	\begin{subfigure}[t]{3.2in}
		\includegraphics[width=3.2in]{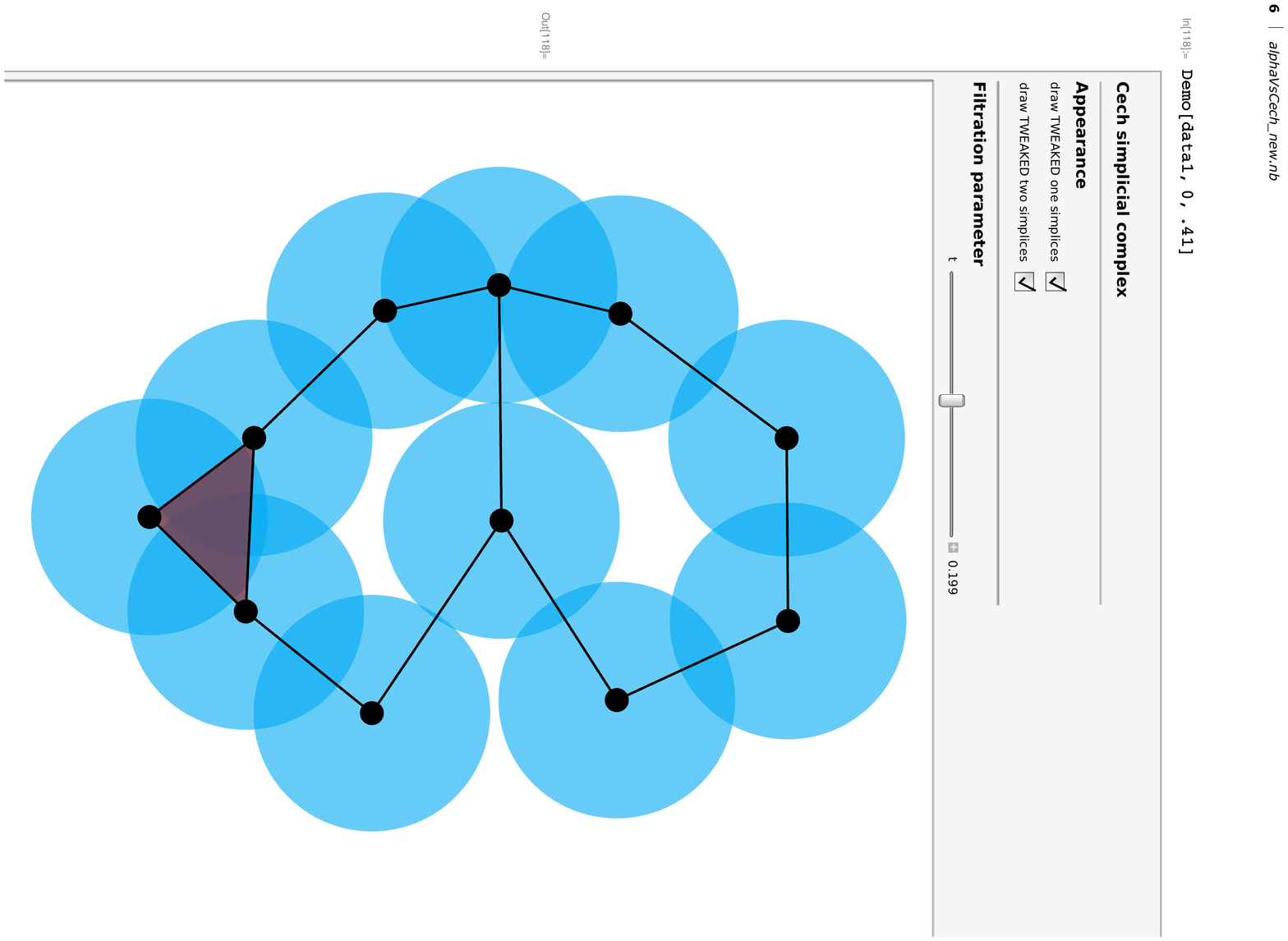}
		\caption{Alpha complex}
	\end{subfigure}
\end{center}
\caption{The alpha complex is a homotopy equivalent subcomplex of the \u Cech complex. For points in general position the 1-skeleton of the alpha complex is embedded in the plane, but the 1-skeleton of the \u Cech complex need not be.}
\label{F: alphaVsCech}
\end{figure}

We assume there are only a finite number of times $0 < t_1 < \ldots < t_n < 1$ when the alpha complex changes. Hence for $t$ and $t'$ in $(t_i, t_{i+1})$, $[0, t_1)$, or $(t_n, 1]$ we have identical alpha complexes $A(t) = A(t')$. Moreover, we assume that at each $t_i$ one of the following changes to the alpha complex occurs.
\begin{enumerate}
\item A single edge is added or removed.
\item A single 2-simplex is added or removed.
\item A free pair consisting of a 2-simplex and a face edge with no other cofaces is added or removed.
\item A Delaunay edge flip occurs.
\end{enumerate}
We assume that each sensor measures the clockwise cyclic ordering of its neighbors in the alpha complex. This cyclic ordering data is necessarily fixed in each interval $(t_i, t_{i+1})$, $[0, t_1)$, or $(t_n, 1]$.

\begin{theorem}\label{T: Stronger sensors}
Suppose we have a planar sensor network with covered region $X(t)$ connected at each time $t \in I$. Then from the time-varying alpha complex and the time-varying cyclic orderings of the neighbors about each sensor, we can determine whether or not an evasion path exists.
\end{theorem}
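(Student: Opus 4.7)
The plan is to build the zigzag persistent homology module $ZH_0(X^c)$ of the uncovered spacetime region directly from the combinatorial data provided, and then prove that an evasion path exists if and only if this zigzag module contains the full-length interval $\I(1, 2n+1)$ as a summand.

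First I would reconstruct the path components of $X(t)^c$ at each time combinatorially. Because the $1$-skeleton of $A(t)$ is embedded in the plane (points in general position) and each sensor reports the clockwise cyclic order of its alpha-complex neighbors, we have a rotation system matching the planar embedding. Tracing faces via the standard rotation-system face-trace algorithm recovers all bounded faces of the embedded $1$-skeleton. Each such face is either a triangle filled by a $2$-simplex of $A(t)$, which is covered, or an unfilled polygon, which is a component of $X(t)^c$ (the hypothesis that $X(t)$ is connected guarantees the unbounded face sits outside the fence and thus contributes nothing). Next, for each of the four permitted moves at a critical time $t_i$, the cyclic-ordering data at the endpoints of the affected simplices determines exactly how the face list changes: an edge addition/removal splits or merges two faces that the rotation data at the endpoints identifies uniquely; a $2$-simplex addition/removal creates or destroys a single triangular empty face; a free-pair move alters the shape of one face without changing the component count; and a Delaunay flip changes the $1$-skeleton inside a doubly-covered quadrilateral and leaves the $X^c$ faces untouched. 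From these local rules we assemble the zigzag maps $H_0(X^c_i) \to H_0(X^{c,i+1}_i) \leftarrow H_0(X^c_{i+1})$ and hence the zigzag module $ZH_0(X^c)$, which by Theorem~\ref{T: Gabriel} decomposes into intervals.

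The forward implication (evasion path $\Rightarrow$ full-length interval in $ZH_0(X^c)$) is already proved in the opening lines of Theorem~\ref{T: longBarcodeNecessary}: a section $s \colon I \to X^c$ factors the identity on $ZH_0(I)$ through $ZH_0(X^c)$, and the splitting lemma supplies an $\I(1, 2n+1)$ summand. For the converse I would use the interval summand to extract, for each $i$, a distinguished component $C_i \subseteq X^c_i$ and a distinguished component $D_i \subseteq X^{c,i+1}_i$ such that the inclusions send $C_i$ and $C_{i+1}$ into $D_i$. Within each combinatorially stable slab $[s_i, s_{i+1}]$ the alpha complex is fixed, so the covered region $X|_{[s_i, s_{i+1}]}$ is swept out by a continuous isotopy of planar sets and each component $D_i$ is therefore homeomorphic to a planar region times an interval, from which one can choose a continuous path $\gamma_i \colon [s_i, s_{i+1}] \to X^c$ whose endpoints lie in $C_i$ and $C_{i+1}$. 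Concatenating the $\gamma_i$ produces the required section.

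The main obstacle will be justifying the backward direction cleanly, in particular the slab-level claim that a chosen path component of $X^c|_{[s_i, s_{i+1}]}$ admits a section matching prescribed endpoints in $C_i$ and $C_{i+1}$. This requires a careful analysis of the intermediate region $X^{c,i+1}_i$ near critical times, showing that the combinatorial data genuinely governs how faces merge, split, appear, or disappear in spacetime (as opposed to merely topologically); here the planarity and the explicit classification of the four moves are essential, since in higher dimensions or without cyclic orderings the combinatorial data cannot distinguish the two embeddings of Figure~\ref{F: cartoonYesNo}. Once this local-to-global realization is established, the equivalence between the existence of an evasion path and the presence of a full-length interval in $ZH_0(X^c)$ yields a decision procedure using only the time-varying alpha complex and cyclic-ordering data.
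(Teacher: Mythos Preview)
Your proposal takes a genuinely different route from the paper's proof, and while the overall strategy is sound, there are gaps worth flagging.

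The paper's proof is more elementary and direct. Like you, it uses the rotation system on the $1$-skeleton $A^1(t)$ to recover the boundary cycles, which biject with the connected components of $X(t)^c$ (after discarding triangles filled by $2$-simplices and the outer face). But instead of assembling $ZH_0(X^c)$ and testing for a full-length interval, the paper simply maintains a \emph{true}/\emph{false} label on each boundary cycle, with \emph{true} meaning ``could contain an intruder.'' At $t=0$ the unfilled faces are labeled \emph{true}; at each critical time the labels are updated by an explicit local rule for each of the four move types (e.g.\ when an edge is removed two cycles merge and the result is \emph{true} if either input was). An evasion path exists iff some boundary cycle at $t=1$ carries the label \emph{true}. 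This is just forward reachability in the Reeb graph of $X^c \to I$, and it sidesteps the zigzag machinery entirely.

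Your approach can be made to work, but two points need repair. First, your sentence ``Within each combinatorially stable slab $[s_i, s_{i+1}]$ the alpha complex is fixed'' is false: by construction $[s_i,s_{i+1}]$ contains the critical time $t_{i+1}$, so the alpha complex changes exactly once inside every slab, and $D_i$ is not a product. You seem to acknowledge this later when you speak of analyzing $X^{c,i+1}_i$ ``near critical times,'' but the product claim must be dropped and replaced by a case analysis of the four moves. Second, and more substantively, a full-length interval \emph{summand} in $ZH_0(X^c)$ need not a priori be supported on actual connected components; its generators could be linear combinations such as $[A]-[B]$. To extract a genuine chain $C_0 \subseteq D_0 \supseteq C_1 \subseteq \cdots$ you must use that this zigzag is the linearization of a zigzag of \emph{sets} $\pi_0(X^c_i) \to \pi_0((X^c)_i^{i+1}) \leftarrow \pi_0(X^c_{i+1})$, and argue that a full-length summand in the linearization forces a full-length path in the underlying set diagram. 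This is true but requires its own argument. What the paper's approach buys is that it never leaves the set-level Reeb graph, so the summand-to-component extraction never arises; what your approach buys is a clean tie-in to the zigzag framework of Theorem~\ref{T: longBarcodeNecessary}, clarifying why $ZH_0(X^c)$ is a sharp invariant even though its Alexander-dual $ZH_{d-1}(X)$ is not.
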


\begin{proof}
Let $A^1(t)$ be the 1-skeleton of the alpha complex at time $t$. For each vertex $v$ we have a cyclic permutation $\pi_v$ acting on the incident edges, where $\pi_v(e)$ is the successor of edge $e$ in the clockwise ordering around $v$. This gives $A^1(t)$ the structure of a rotation system \cite{MoharThomassen}, also called a fat graph or ribbon graph \cite{Igusa}. A rotation system partitions the directed edges of $A^1(t)$ into sets of boundary cycles. Each boundary cycle is a loop of directed edges $(e_1 e_2 \ldots e_k)$ constructed so that if $v_i$ is the target vertex of directed edge $e_i$, then $\pi_{v_i}(e_i) = e_{i+1}$, where $e_{k+1} = e_1$. See Figure~\ref{F: fatGraphUncanonical} for an example, and note that this cyclic ordering data distinguishes the two sensor networks in Figure~\ref{F: cartoonYesNo}.

\begin{figure}[h]
\begin{center}
    	\includegraphics[width=4in]{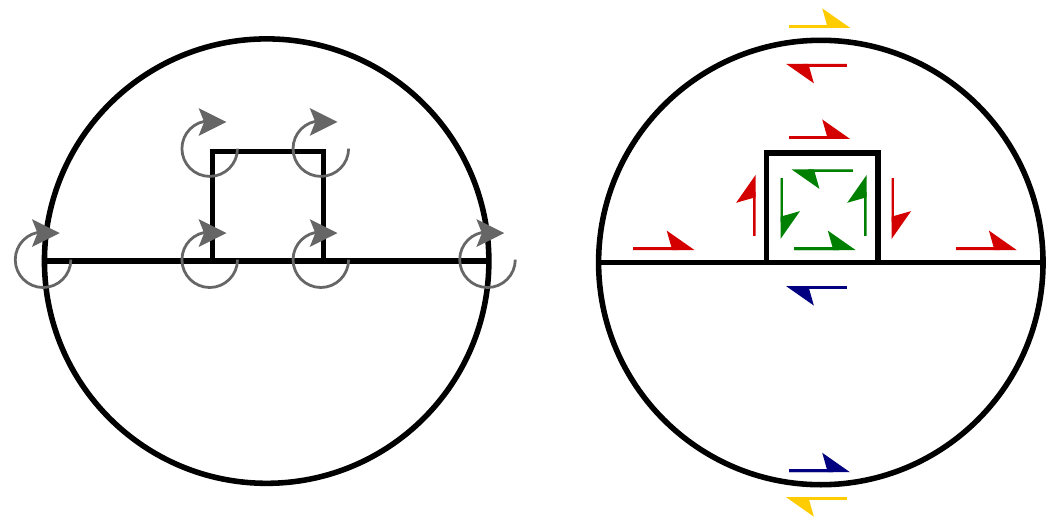}
\end{center}
\caption{An example rotation system. The cyclic orderings are drawn on the left in gray, and the four boundary cycles are drawn on the right in red, green, blue, and yellow.}
\label{F: fatGraphUncanonical}
\end{figure}

The boundary cycles of $A^1(t)$ are in bijective correspondence with the connected components of $\R^2 \setminus A^1(t)$. Removing the boundary cycles of length three that are filled by 2-simplices (and also the boundary cycle corresponding to the outside of $\partial \xD$) produces a bijection with the connected components of the uncovered region $X(t)^c$. Hence by tracking the boundary cycles of $A^1(t)$ we can measure how the connected components of the uncovered region merge, split, appear, and disappear. In other words, we can reconstruct the Reeb graph of $X^c \to I$ \cite{Reeb}.

We will maintain labels on the boundary cycles of $A^1(t)$ so that a boundary cycle is labeled {\em true} if the corresponding connected component of $\R^2 \setminus A^1(t)$ may contain an intruder and {\em false} if not. At time $t=0$ we label the boundary cycles of length three filled by 2-simplices in $A(0)$ (and also the boundary cycle corresponding to the outside of $\partial \xD$) as {\em false}. All other boundary cycles are labeled {\em true}. When we pass a time $t_i$ when the alpha complex changes, we update the labels as follows.
\begin{enumerate}
\item If a single edge is added, then a single boundary cycle splits in two since $X(t)$ is connected. Each new boundary cycle maintains the original label. If a single edge is removed, then two boundary cycles merge since $X(t)$ remains connected, and the new cycle is labeled {\em true} if either of the original two cycles were labeled {\em true}.
\item If a single 2-simplex is added, then the label on the corresponding boundary cycle of length three is set to {\em false}. If a single 2-simplex is removed, then the label on the corresponding boundary cycle of length three remains {\em false}.
\item If a free pair consisting of a 2-simplex and a face edge is added, then a boundary cycle splits into two with one label unchanged. The other label corresponding to the added 2-simplex is set to {\em false}. If a free pair is removed, then the boundary cycle of length three corresponding to the 2-simplex is removed and the label on the other modified boundary cycle remains unchanged.
\item If a Delaunay edge flip occurs, then two boundary cycles labeled {\em false} are replaced by two different boundary cycles also labeled {\em false}.
\end{enumerate}
An evasion path exists in the sensor network if and only if there is a boundary cycle in $A^1(1)$ labeled {\em true}. Such a boundary cycle corresponds to a connected component of the uncovered region $X(1)^c$ at time one which could contain an intruder.
\end{proof}

\begin{figure}[h]
\begin{center}
	\begin{subfigure}[t]{6in}
		\includegraphics[width=6in]{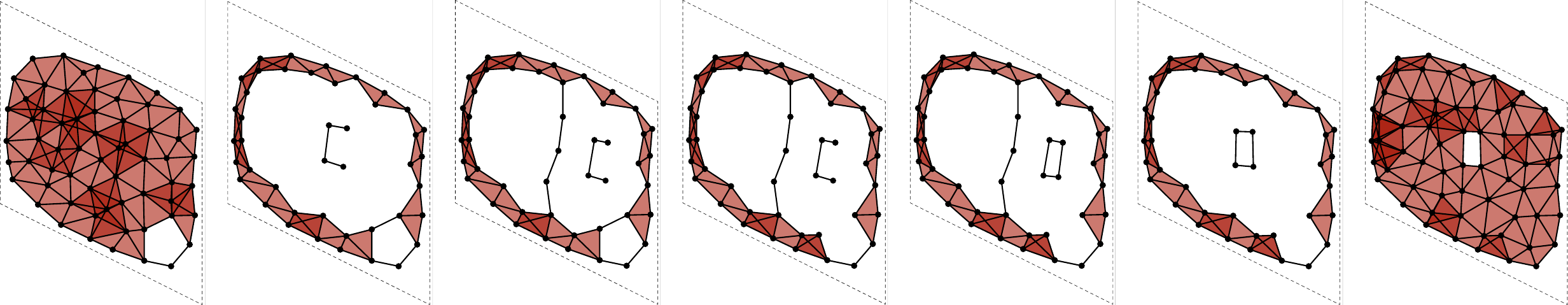}
		\caption{}
	\end{subfigure}	
	\begin{subfigure}[t]{6in}
		\includegraphics[width=6in]{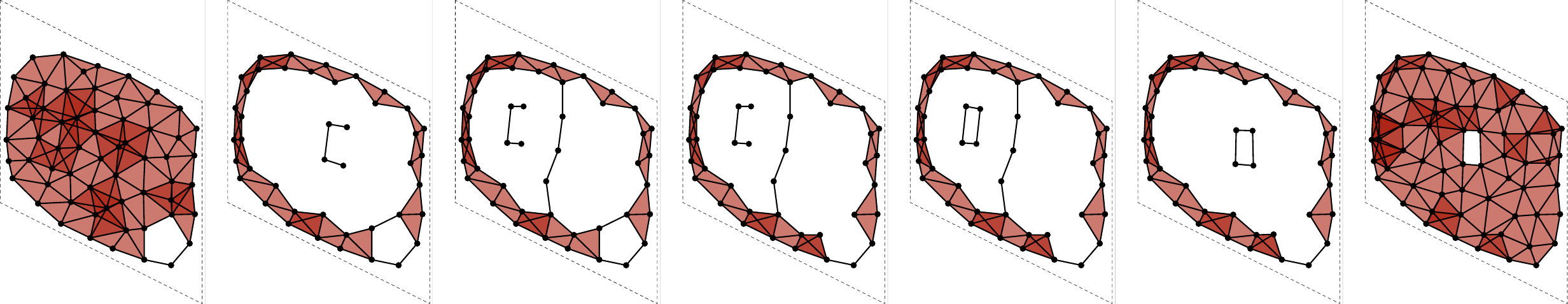}
		\caption{}
	\end{subfigure}
\end{center}
\caption{Each subfigure is a sensor network represented by seven sequential \u Cech complexes. At each time the \u Cech complexes, alpha complexes, and cyclic ordering information are identical. Nevertheless, network (a) contains an evasion path but network (b) does not. Hence it is necessary in Theorem~\ref{T: Stronger sensors} to assume that each $X(t)$ is connected.}
\label{F: needConnectedYesNo}
\end{figure}

Figure~\ref{F: needConnectedYesNo} shows that the connectedness assumption in Theorem~\ref{T: Stronger sensors} is necessary. It is an open question if the cyclic ordering information along with the time-varying \u Cech complex (instead of the time-varying alpha complex) suffice.

\begin{open question}
Suppose we have a planar sensor network with $X(t)$ connected at each time $t$. Using only the time-varying \u Cech complex and the time-varying cyclic orderings of the neighbors about each sensor, is it possible to determine if an evasion path exists?
\end{open question}

An answer to this open question would fill the gap between Theorem~7 of \cite{Coordinate-free} or equivalently our Theorem~\ref{T: longBarcodeNecessary}, which use only minimal sensor capabilities but are not sharp, and our Theorem~\ref{T: Stronger sensors}, which is sharp but requires more advanced sensors measuring alpha complexes. One difficulty in working with the \u Cech complex is that its 1-skeleton need not be embedded in the plane; see Figure~\ref{F: alphaVsCech}(a).

\section{Conclusions}\label{C: Conclusions}

This paper addresses an evasion problem for mobile sensor networks in which the sensors don't know their locations and instead measure only local connectivity data. In \cite{Coordinate-free}, de~Silva and Ghrist  provide a homological criterion depending on this limited input which rules out the existence of an evasion path in many sensor networks. We use zigzag persistence to produce a criterion of equivalent discriminatory power that also allows for streaming computation, which is an important feature for sensor networks moving over a long period of time.

It turns out that no method relying on connectivity data alone can determine in all cases if an evasion path exists. Indeed, we provide examples showing that the fibrewise homotopy type of the sensor network does not determine the existence of an evasion path; the embedding of the sensor network in spacetime also matters. We therefore consider a stronger model for planar sensors which measure cyclic orderings and alpha complexes, and given this model we provide necessary and sufficient conditions for the existence of an evasion path.

We end with two possible directions for future research. First, we are interested in the open question from Section~\ref{S: Sensors Measuring Cyclic Orderings}: can one determine the existence of an evasion path using only \u Cech complexes and the cyclic ordering data? An answer to this question would fill the gap between Theorem~7 of \cite{Coordinate-free} (or equivalently our Theorem~\ref{T: longBarcodeNecessary}) and our Theorem~\ref{T: Stronger sensors}. Second, the evasion problem motivates a natural extension discussed in \cite{MyThesis}: can we describe the entire space of evasion paths? Knowledge about the space of evasion paths may be helpful in determining how to best patch a sensor network that contains an evasion path. Alternatively, we may want to find the evasion path that maintains the largest separation between the intruder and the sensors, that requires an intruder to move the shortest distance, or that requires an intruder to move at the lowest top speed. Knowledge about the space of sections may be helpful for such problems.

\subsection*{Funding}

This work was supported by the National Science Foundation [DMS 0905823, DMS 0964242]; the Air Force Office of Scientific Research [FA9550-09-1-643, FA9550-09-1-0531]; and the National Institutes of Health [I-U54-ca49145-01]. H.~Adams was supported by a Ric Weiland Graduate Fellowship at Stanford University.

\appendix
\section{Index to Multimedia Extensions}
\begin{tabular}{lll}
\hline
Extension & Media Type & Description \\
\hline
1 & Video & \href{http://www.ima.umn.edu/~henrya/research/DependenceOnTheEmbedding.avi}{Video of the two sensor networks in Figure~\ref{F: cartoonYesNo}} \\
\hline
\end{tabular}

\section{\u Cech Complex Approximations}\label{A: Cech Complex Approximations}

In this appendix we explain the Vietoris--Rips approximation to the \u Cech complex. We still assume that each sensor covers a ball of radius one, but we now consider different communication distances between the sensors. Two sensors no longer detect when they overlap, but instead when their centers are within communication distance $2\epsilon$. Hence the sensors measure a time-varying communication graph which at time $t$ has an edge between sensors $v$ and $\tilde{v}$ if $\|v(t) -\tilde{v}(t)\| \leq 2\epsilon$.

The Vietoris--Rips complex $\VR(t, \epsilon)$ is the maximal simplicial complex built on top of the connectivity graph with communication distance $2\epsilon$ at time $t$. Equivalently, a simplex is in $\VR(t, \epsilon)$ when its diameter is at most $2\epsilon$ \cite{Vietoris}. Note a simplex is included in $\VR(t, \epsilon)$ when all its edges are in the connectivity graph, and so the Vietoris--Rips complex can be constructed from the connectivity graph. See Figure~\ref{F: sensorBalls_Rips_Cech} for an example.

\begin{figure}[h]
\begin{center}
	\begin{subfigure}[t]{2.5in}
		\includegraphics[width=2.5in]{sensorBalls_Cech.pdf}
		\caption{\u Cech complex}
	\end{subfigure}
	\hspace{10mm}
	\begin{subfigure}[t]{2.5in}
		\includegraphics[width=2.5in]{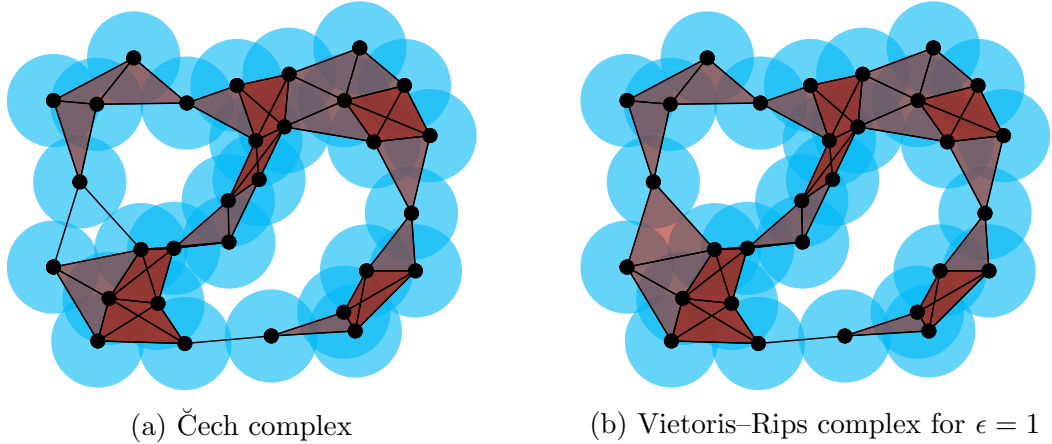}
		\caption{Vietoris--Rips complex for $\epsilon = 1$}
	\end{subfigure}
\end{center}
\caption{Note the 2-simplex that is absent from the \u Cech complex but is present in the Vietoris--Rips complex.}
\label{F: sensorBalls_Rips_Cech}
\end{figure}

By changing the communication distance of the sensors, we can approximate the \u Cech complex from either direction using a Vietoris--Rips complex. Jung's Theorem \cite{Jung} implies
$$\VR\Biggl(t, \sqrt{\frac{d+1}{2d}}\Biggr) \subset C(t) \subset \VR(t, 1).$$
Let $p \colon \VR(t, \epsilon) \to \R^d$ be the projection of the Vietoris--Rips complex into $\R^d$. It follows from Jung's Theorem that if every continuous map $s \colon I \to \xD$ satisfies $s(t) \in p\Bigl(\VR\Bigl(t, \sqrt{\frac{d+1}{2d}}\Bigr)\Bigr)$ for some $t$, then there is no evasion path. Similarly, if there is a continuous map $s \colon I \to \xD$ with $s(t) \notin p(\VR(t, 1))$ for all $t$, then there is an evasion path. Hence we can use the time-varying Vietoris--Rips complex to prove one-sided results about the existence of an evasion path. For example, when $d = 2$ the bound $\VR\Bigl(t, \sqrt{\frac{d+1}{2d}}\Bigr) \subset C(t)$ is closely related to Lemma~1 of \cite{Coordinate-free}, and is used to provide necessary conditions for the existence of an evasion path.

\bibliographystyle{alpha}
\bibliography{EvasionPathsInMobileSensorNetworks}

\end{document}